\documentclass[11pt]{amsart}

\usepackage{amsmath, amsthm, amsfonts, amssymb, amscd}
\usepackage[mathscr]{eucal}
\usepackage[dvips]{graphicx}
\usepackage[dvips]{color}

\input xy
\xyoption{all}

\newcommand{\iotaA}{i}

\newcommand{\Z}{\mathbf Z}
\newcommand{\Q}{\mathbf Q}
\newcommand{\C}{\mathbf C}
\newcommand{\R}{\mathbf R}
\newcommand{\PP}{\mathbf P}
\newcommand{\PSL}{\mathrm {PSL}}

\newcommand{\SL}{{\mathrm{SL} }}

\newcommand{\msr}[1]{\mathscr{#1}}
\newcommand{\mbf}[1]{\mathbf{#1}}

\newcommand{\mr}[1]{\mathrm{#1}}

\newcommand{\HH}{\mbf{H}}

\newcommand{\fq}{\mathbf{F}_q}

\DeclareMathOperator{\Gal}{Gal}
\DeclareMathOperator{\Frob}{Frob}
\DeclareMathOperator{\Tr}{Tr}

\newcommand{\abcd}[4]{\left(
        \begin{smallmatrix}#1&#2\\#3&#4\end{smallmatrix}\right)}

\theoremstyle{plain}
\newtheorem{thm}{Theorem}[section]

\newtheorem{Def}[thm]{Definition}

\numberwithin{thm}{subsection}

\newcommand{\comment}[1]{}

\begin{document}

\title[Modular forms, noncongruence subgroups, ASwD
relations]
{Modular forms on noncongruence subgroups and Atkin-Swinnerton-Dyer
relations}

\thanks {This research was carried out as part of
an REU summer program at LSU, supported by the
National Science Foundation grant DMS-0353722 and a
Louisiana Board of Regents Enhancement grant,
LEQSF (2002-2004)-ENH-TR-17.
The last author was partially supported by grants
LEQSF (2004-2007)-RD-A-16 and NSF award DMS-0501318.
}

\author[Fang, Hoffman, Linowitz, Rupinski, Verrill]
{Liqun Fang, J. William Hoffman, Benjamin Linowitz,
        Andrew Rupinski, Helena Verrill}
\address[Fang, Hoffman and Verrill]
      {Mathematics Department \\Louisiana State University \\
 Baton Rouge 70803 \\Louisiana}
\address[Linowitz]{Mathematics Department \\Dartmouth College\\
Hanover 03755\\New Hampshire}
\address[Rupinski]{Mathematics Department \\ University of Pennsylvania \\
Philadelphia 19104 \\Pennsylvania}

\date{\today}
\email[Fang]{liqun@math.lsu.edu}
\email[Hoffman]{hoffman@math.lsu.edu}
\email[Linowitz]{Benjamin.D.Linowitz@dartmouth.edu}
\email[Rupinski]{rupinski@math.upenn.edu}
\email[Verrill]{verrill@math.lsu.edu}

\begin{abstract}
We give new examples of   noncongruence subgroups $\Gamma\subset
\SL_2(\Z)$ whose space of weight 3 cusp forms $S_3(\Gamma
)$ admits a basis satisfying the Atkin-Swinnerton-Dyer congruence
relations with respect to a weight 3 newform for a certain
congruence subgroup.
\end{abstract}

\maketitle


\section{Introduction}
\label{S:intro}

A finite index subgroup of $\SL_2(\Z)$
is {\it noncongruence} if it
does not contain
$
\Gamma (N)
$
for any $N \ge 1$.
The study of modular forms on such subgroups was initiated
by Atkin and Swinnerton-Dyer who 
discovered experimentally
the congruences now bearing their
names \cite{ASwD}. 
Subsequently, Scholl proved congruences
satisfied by the coefficients of modular forms on noncongruence
subgroups \cite{Sch85i,Sch85ii,Sch87,Sch88,Sch93}.
A refined conjecture has recently been put forward by
Atkin, Li, Long and  Yang \cite{LLY},\cite{ALL},
\cite{LL}. See \cite{LLY2} for a general survey of this.

In this paper we give
new examples of noncongruence subgroups 
having a basis of cuspidal modular forms satisfying the
Atkin-Swinnerton-Dyer (ASwD) congruences.
We only give experimental evidence of our results,
obtained using {\sc Magma}
\cite{magma}, Mathematica,  and PARI \cite{PARI2}.
In a later publication, we will give a detailed 
treatment of one of our examples.

\subsection{Notation}
\label{SS:back1}
We assume familiarity with the action of $\SL_2(\R)$
on the upper half complex plane $\HH$, 
with congruence subgroups such as
$\Gamma_0(N)$, $\Gamma_1(N)$, $\Gamma^0(N)$, $\Gamma^1(N)$,
and with
$M_k (\Gamma )$ and $S_k (\Gamma )$ the
finite-dimensional vector spaces of modular forms and
cusp forms for $\Gamma$, and $S_k (\Gamma _0 (N), \chi
)$ the space of cusp forms with character $\chi : (\Z /
N)^{\ast}\to \C ^{\ast}$. 

It is well known 
(see \cite{shimu} for details)
that $S_{k}(\Gamma _0 (N),\chi )$
has a basis
of Hecke eigenforms, which have $q$-expansions
\[
f(z)=\sum_{n\ge 1} a_n (f)q^n, \quad \mr{where\ \ }q = \exp(2\pi iz),
\]
with
$a_n$ satisfying the relations 
\begin{equation}
\label{eqn:Heckerelation} a_{np} - a_pa_n + \chi(p)p^{k-1}a_{n/p} =
0,\quad a_n = a_n(f)
\end{equation}
for all positive integers $n$ and primes $p\not| N$, taking $a_{n/p}=0$ if
$p\not|n$.

\subsection{Atkin--Swinnerton-Dyer congruences}
\label{SS:back2}

If $\Gamma$ is a  {\it noncongruence} subgroup, then 
 $S_k(\Gamma)$ has no basis of
forms satisfying (\ref{eqn:Heckerelation}).  Instead, it is conjectured
that certain congruences hold, as in the following definition.

\begin{Def}[\cite{LLY}]
\label{def:LLYASWD}
Suppose that the noncongruence subgroup
 $\Gamma$ has cusp width $\mu$ at infinity, and that
$h\in S_k(\Gamma )$ has
an $M$-integral $q^{1/\mu}$-expansion
$h=\sum a_n(h)q^{n/\mu}$ for some $M\in\Z$.
(cf \cite[Proposition 5.2]{Sch85ii}).
Let $f = \sum c_n(f)q^n$ be a
normalized newform of weight $k$, level $N$, character $\chi$.
The forms $h$ and $f$ are said to satisfy the
Atkin-Swinnerton-Dyer congruence relation if, for all primes $p$
not dividing $MN$ and for all $n \ge 1$,
\begin{eqnarray}\label{e:0.8}
 (a_{np}(h) - c_p(f)a_n(h) + \chi(p)p^{k-1}a_{n/p}(h))/(np)^{k-1}
 \end{eqnarray}
is integral at all places dividing $p$.
\end{Def}

\begin{Def}
\label{def:aswdbasis}
We say that $S_k (\Gamma)$ has an ASwD basis if there
is a basis $h_1, ..., h_n$ of  $S_k (\Gamma)$
and normalized newforms $f_1, ..., f_n$ such that 
each pair $(h_i, f_i)$ satisfies the
ASwD congruence relation in Definition~\ref{def:LLYASWD}.
\end{Def}
Note that, in the above definition, the choices of $h_1, ..., h_n$
and of $f_1, ..., f_n$ may depend on the prime number $p$. There are
examples known where the same $h_i$ and  $f_j$ work for every 
prime $p$ (actually all but a finite number of exceptional primes). 
On the other hand, there are examples known where 
the choice of the ASwD basis depends on the value of $p$ modulo 
some modulus $N$ (see examples 2 and 3 in the tables below).

\section{Statement of results}
\label{SS:statement}
\subsection{Tables.}
\label{SS:tables}
For the noncongruence subgroups $\Gamma $ 
considered, there are two main issues addressed:
\begin{enumerate}
 \item Modularity of the $l$-adic Scholl's representation 
attached to the cusp forms of weight 3, $S_3 (\Gamma )$.
\item Giving a basis of $S_3 (\Gamma )$ that satisfies ASwD
congruences.
\end{enumerate}
In our cases the dimension of  $S_3 (\Gamma )$ is 2 so the 
$l$-adic representation is 4 dimensional. We find that this 
4-dimensional representation breaks up into two 2-dimensional 
pieces, each of which is isomorphic to the 2-dimensional 
representations that Deligne constructed for 
Hecke eigenforms
$f$ on congruence subgroups. Thus, each $S_3 (\Gamma )$
should be associated to a pair $f_1, f_2$ of Hecke eigenforms
on {\it congruence} subgroups. In the 
examples, these are one and the same form, or conjugate forms
or base extensions of one form to a quadratic extension of
$\Q$.

In Tables~\ref{tab:summary1},
\ref{tab:summary2}, 
\ref{tab:summary3},
\ref{tab:summary4}, 
we define modular forms
$h_1,h_2,f$, where $h_1$ and $h_2$ span $S_3(\Gamma)$ for
the noncongruence subgroup $\Gamma$
given in Definition~\ref{def:8groups}, and $f$ is a weight $3$ Hecke
eigenform for some congruence subgroup.  
For each group we give a basis $(h_1,h_2)$
of $S_3(\Gamma)$, in some cases depending on the prime $p$,
and a newform $f$ with
$(h_i,f)$ satisfying the ASwD congruence relation.
Most forms are given  in terms of the Dedekind eta function,
\begin{equation}
\eta(z)= q^{1/24} \prod_{n = 1} ^{\infty} (1 - q^{n}), 
\;\; \text{ where }\; q = e^{2\pi i z}. 
\label{eqn:Ded_eta_def}
\end{equation}
Our experiments support the following:
\begin{thm}
\label{T:modularity}
Let $\rho$ be the $l$-adic representation constructed by Scholl
for $S_3(\Gamma )$ for an appropriate choice of $\Q$-model
of the curve $X_{\Gamma}$.  For the $L$-function of the 
corresponding representations we have 
\[
L(s, \rho )  = L(s, f) L(s, f) \quad \mr{for\  1a,\  1b}, 
\]
\[
L(s, \rho )  = L(s, f) L(s, \overline{f}) \quad \mr{for \ 3a,\  3b, \ 4a,\  4b}. 
\] 
\end{thm}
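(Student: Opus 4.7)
The plan is to establish the $L$-function identities by showing that the Scholl $l$-adic representation $\rho$ decomposes as a direct sum of two two-dimensional pieces, each of which is isomorphic to a Deligne representation of a weight $3$ newform. First, I would fix an explicit $\Q$-model of the modular curve $X_\Gamma$ and apply Scholl's construction to obtain the four-dimensional $l$-adic Galois representation $\rho$ attached to $S_3(\Gamma)$, realized on the parabolic cohomology of the Kuga-Sato-type variety built from the universal elliptic curve over $X_\Gamma$.

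Next, I would exhibit a geometric splitting $\rho \simeq \rho_1 \oplus \rho_2$ into Galois-stable two-dimensional pieces. In cases 1a, 1b the splitting should be $\Q$-rational, arising either from a $\Q$-defined involution on the Kuga-Sato variety or from a fibration over a (coarse) modular curve, with $\rho_1 \simeq \rho_2 \simeq \rho_f$ for a single newform $f$. In cases 3a, 3b, 4a, 4b the splitting should instead be defined over a quadratic field $K/\Q$, with $\rho_1$ and $\rho_2$ swapped by $\Gal(K/\Q)$ and isomorphic to $\rho_f$ and $\rho_{\overline{f}}$ respectively. The automorphisms used here are suggested by the explicit description of $\Gamma$ and the Hauptmodul for $X_\Gamma$ given in Definition~\ref{def:8groups}.

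Third, I would identify the candidate newform $f$ by reading off its invariants from $\rho_1$: the level from the primes of bad reduction of the chosen model, the weight as $3$, and the nebentypus from $\det \rho_1$. This produces a finite list of candidates in $S_3(\Gamma_0(N),\chi)$ to test. The isomorphism $\rho_1 \simeq \rho_f$ (and, in the mixed case, $\rho_2 \simeq \rho_{\overline{f}}$) would then be established by a Faltings-Serre style comparison: verify that both representations are unramified outside a common finite set of primes, bound the residual images, and check equality of $\Tr \Frob_p$ (or the full characteristic polynomial) on an effectively determined finite set of primes of good reduction. Matching Euler factors prime by prime then yields the $L$-function identity.

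The main obstacle will be the Faltings-Serre comparison, not the geometric splitting. One must compute $\Tr \rho(\Frob_p)$ directly via point counts on the reduction of the Kuga-Sato variety mod $p$, \emph{independently} of the ASwD congruences themselves, which are precisely the expected consequence of the modularity statement. Producing effective bounds on the comparison set, and controlling the residual Scholl representation at small residue characteristics, is technically demanding in the noncongruence setting, where standard tools tailored to congruence modular Galois representations do not directly apply. This is why the authors present the result as supported by experimental evidence --- matching traces $\Tr \Frob_p$ for many primes $p$ --- with the detailed geometric and arithmetic justification for one sample case deferred to a later paper.
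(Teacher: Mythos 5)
Your proposal takes essentially the same route as the paper: split the four-dimensional Scholl representation using the involutions and isogenies of Section \ref{S:II} (defined over $\Q$ in cases 1a, 1b and over a quadratic field in the remaining cases), compute $\Tr\,\rho(\Frob_p)$ by point counting on the elliptic surfaces as in Section \ref{S:traces}, and identify the two-dimensional pieces with the Deligne representations of the indicated newforms. The paper itself only carries this out at the level of experimental trace comparison---the rigorous Faltings--Serre/Livn\'e-style identification is precisely the step the authors omit, noting that a complete proof for cases 1a, 1b appeared in an earlier version and resembles published examples---so your outline, including your remark about where the technical burden lies, coincides with the authors' intended argument.
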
 
In an earlier version of this paper a complete proof for cases 1a and 1b was given. 
We do not reproduce it here as it is very similar to other published examples. 
The $L$-function for examples 2a, 2b exhibits new and interesting features and will be discussed in a future work.

\subsection{The examples.} 
\label{SS:examples}
All the noncongruence subgroups $\Gamma$
discussed in this paper are of index three inside a congruence subgroup
$G$ which itself is one of the index 12 genus 0 subgroups considered by Beauville. Each of these gives rise to a family if elliptic curves 
$E_{G}\to X_{G} =
(G\backslash \HH)^* \cong \mbf{P}^1 (\C) $ with 
ramification over the four cusps of $G$. For each of these, 
we select two of the cusps of $G$ to construct
a subgroup $\Gamma$ such that the corresponding covering 
\[
X_{\Gamma}\cong \mbf{P}^1 (\C)  \longrightarrow 
X_{G} \cong \mbf{P}^1 (\C) 
\]
branches only over the two chosen cusps. We describe these coverings
in the form $r^3 = m(t)$, where $r$ (resp. $t$) is a generator 
of the function field of $X_{\Gamma}$ (resp. $X_{G}$), i.e., a Hauptmodul,
which exists since these curves have genus 0. See table \ref{tab:coveringmaps}. 
We have also considered arithmetic twists of a given covering
gotten by varying some of the constants in the expression of $m(t)$. This 
leads to different models of Scholl's $l$-adic representation attached to
$S_3 (\Gamma)$, i.e., representations of $\mr{Gal} (\overline{\Q}/\Q)$
that become isomorphic as representations of  $\mr{Gal} (\overline{\Q}/K)$
for a finite extension $K/\Q$. It is an important point that, 
in contrast to the case of classical modular curves for congruence
subgroups, there are no {\it canonical} models defined over a number
field. Scholl's construction of his $l$-adic representations depends 
on a choice of a model. Moreover, this choice is subject to a number of 
hypotheses: generally that there should be a model defined over
$\Q$, and a cusp which is $\Q$-rational. This cusp is used for the 
expansions of modular forms whose coefficients satisfy ASwD congruences. 

The $l$-adic representations that Scholl constructs that are associated to 
$S_k (\Gamma)$ for noncongruence subgroups $\Gamma $ have very different 
properties from the corresponding representations constructed by Deligne
for congruence $\Gamma$. The main point is that in the congruence case, 
the Hecke algebra acts and commutes with the Galois action so that 
the $2d$-dimensional representation ($d = \dim S_k(\Gamma)$) splits 
into $2$-dimensional $\lambda$-adic representations. This is no longer the 
case in general for noncongruence subgroups. It is the case in our examples
that the $4$-dimensional representations attached to $S_3 (\Gamma)$ factor
into $2$-dimensional pieces. Geometrically this is due to the presence 
of extra symmetries given by involutions and/or isogenies of our 
elliptic surfaces.  

\subsection{Outline.}
\label{SS:outline}
In section \ref{S:desc} we define the congruence and noncongruence 
subgroups we will be working with. Section \ref{S:constructing} gives the method
we use to construct the noncongruence forms $h_1$, $h_2$. Section \ref{S:traces}
explains how we computed the traces of Frobenius elements in the 
$l$-adic Scholl's representation attached to our group $\Gamma$. The main point
is to count the number of rational points over $\mbf{F}_p$ and
$\mbf{F}_{p^2}$ of the elliptic modular surface $E_{\Gamma}$. 
In section \ref{S:II}. we discuss involutions and isogenies 
of these elliptic surfaces. 
Finally in section

\ref{S:aswddata} we provide the experimental evidence for the ASwD congruences.

\begin{table}
\begin{tabular}{|l|}
\hline
{\bf 1a.}
Basis of $S_3(\Gamma_{24.6.1^6})$:\\
$
h_1(z)=
\displaystyle{\sqrt[3]{\frac{\eta(z)^4\eta(4z)^{20}}{\eta(2z)^6}}}=
 q - \frac{4}{3}
q^2 + \frac{8}{9}q^3 - 
\frac{176}{81}q^4 + \cdots
$
\\
$
h_2(z)=
\displaystyle{
\sqrt[3]{\frac{\eta(4z)^{16}\eta(2z)^6}{\eta(z)^4}}}
=
 q + \frac{4}{3}q^2 + \frac{8}{9}q^3 + \frac{176}{81}q^4 + \cdots
$
\\
Associated newform in $S_3 (\Gamma _0 (48), \chi),$ where
$\chi(\Frob_p)=\left(\frac{-3}{p}\right)\left(\frac{-4}{p}\right)$:
\\
$
f(z) =\displaystyle{
\frac{\eta(4z)^9\eta(12z)^9}
{\eta(2z)^3\eta(6z)^3\eta(8z)^3\eta(24z)^3}}
=
q + 3q^3 - 2q^7 + 9q^9 
- 22q^{13} 
+\ldots
$
\\
The ASwD  basis is $h_1, h_2$.\\
\hline
\hline
{\bf 1b.}
Basis of $S_3(\Gamma_{8^3.2^3.3^3})$:
\\
$
h_1(z) =\displaystyle{
\sqrt[3]{
\frac{\eta(2\tau)^{20}\eta(8\tau)^{4}}{\eta(4\tau)^{6}}}}
=
q^{1/3} - \frac{20}{3}{q^{4/3}} + \frac{128}{9}{q^{7/3}} 
- \frac{400}{81}{q^{10/3}} + \cdots$
\\
$
h_2(z) =
\displaystyle{
\sqrt[3]{\frac{\eta(2\tau)^{16}\eta(4\tau)^{6}}{\eta(8\tau)^{4}}}}
=
q^{2/3} - \frac{16}{3}q^{14/3} + \frac{38}{9}q^{26/3} + \frac{1696}{81}q^{38/3}
 + \cdots
$\\
The associated newform is a twist $f\otimes \chi$ of the $f$ in case 1a.\\
The ASwD basis is $h_1, h_2$.\\
\hline
\end{tabular}
\caption{Modular forms for noncongruence subgroups, and
associated forms for congruence subgroups.}
\label{tab:summary1}
\end{table}


\begin{table}
\begin{tabular}{|l|}
\hline
{\bf 2a.}
Basis of $S_3(\Gamma_{8^3.6.3.1^3})$:
\\
$
h_1(z)
=
\displaystyle{
\sqrt[3]{\frac{\eta(z)^{4}\eta(2z)^{10}\eta(8z)^{8}}{\eta(4z)^4}}}
=
q - \frac 4 3q^2 - \frac{40}{9}q^3 + \frac{400}{81}q^4 
+ \frac{1454}{243}q^5 + \cdots
$
\\
$
h_2(z)
=
\displaystyle{
\sqrt[3]{\frac{\eta(z)^{8}\eta(4z)^{10}\eta(8z)^{4}}{\eta(2z)^4}}}
=
 q - \frac 8 3q^2 + \frac 8 9q^3 + \frac {32} {81}q^4 
- \frac{82}{243}q^5 + \ldots
$
\\
Newfor
m in $S_3 (\Gamma _0 (432), \chi)$, 
where
$\chi(\Frob_p)=\left(\frac{-4}{p}\right)$:\\
$
f(z)=
f_1(12z) + 6\sqrt{2} f_5(12z) + \sqrt{-3}f_7(12z) + 6\sqrt{-6}f_{11}(12z),
$
\\
where\\
$\begin{array}{lll}
f_1(z)= \frac{\eta(2z)^3\eta(3z)}{\eta(6z)\eta(z)}E_6(z)
&
f_5(z)=
\frac{\eta(z)\eta(2z)^3\eta(3z)^3}{\eta(6z)}&
\\
\rule{0ex}{5ex}
f_7(z)=\frac{\eta(6z)^3\eta(z)}{\eta(2z)\eta(3z)}E_6(z)
&
f_{11}(z)=
\frac{\eta(3z)\eta(z)^3\eta(6z)^3}{\eta(2z)}&
\end{array}$
\\
$\text{ and } E_6(z)=
1 + 12\sum_{n\ge 1}(\sigma(3n) -3\sigma(n))q^n,$
$\text{ where }\sigma(n)=\sum_{d|n} d.$
\\
Atkin Swinnerton-Dyer basis:
\\
$
\begin{array}{lll}
\text{if } p\equiv 1\mod 3   &\text{ basis is }& h_1, h_2\\
\text{if }p\equiv 2\mod 3  &\text{ basis is }& h_1\pm \alpha h_2, \ \ \alpha ^3  = 4.
\end{array}
$
\\
\hline
\hline
{\bf 2b.}
Basis of $S_3(\Gamma_{24.3.2^3.1^3})$:
\\
$
h_1(z) =\displaystyle{
\sqrt[3]{
\frac{\eta(2\tau)^{22}\eta(8\tau)^{8}}{\eta( \tau)^{4}\eta(4\tau)^{8}}}
=
q
 + \frac{4}{3} q^2
 - \frac{40}{9} q^3
 - \frac{400}{81} q^4
 + \frac{1454}{243} q^5
+ \cdots}
$
\\
$
h_2(z) =
\displaystyle{
\sqrt[3]{
\frac{\eta(2\tau)^{20}\eta(4\tau)^{2}\eta(8\tau)^{4}}{\eta( \tau)^{8}}}
=
q
 + \frac{8}{3} q^2
 + \frac{8}{9} q^3
 - \frac{32}{81} q^4
 - \frac{82}{243} q^5
+ \cdots}
$
\\
The associated new form and the ASwD basis\\
are given in exactly the same way as in case 2a.\\
\\
A variant denoted $S_3(\Gamma_{24.3.2^3.1^3 B})$  is discussed in section
\ref{SSS:836}
\\
\hline

\end{tabular}
\caption{Modular forms for noncongruence subgroups, and
associated forms for congruence subgroups.}
\label{tab:summary2}
\end{table}

\begin{table}
\begin{tabular}{|l|}
\hline
{\bf 3a.}
Basis of $S_3(\Gamma_{18.6.3^3.1^3})$
\\
$
h_1(z)
=
\displaystyle{
\sqrt[3]{\frac{\eta(z)^{4}\eta(2z)^7\eta(6z)^{11}}{\eta(3z)^4}}}
=
q - \frac 4 3q^2 - \frac{31}{9}q^3 + \frac{400}{81}q^4 
+ \frac{104}{243}q^5 + \cdots
$
\\
$
h_2(z)
=
\displaystyle{
\sqrt[3]{\frac{\eta(3z)^{4}\eta(6z)^7\eta(2z)^{11}}{\eta(z)^4}}}
=
 q + \frac 4 3q^2 - \frac 7 9q^3 - \frac {112} {81}q^4 
- \frac{616}{243}q^5 + \ldots
$
\\
Newform in $S_3 (\Gamma _0 (243), \chi)$, where
$\chi(\Frob_p)=\left(\frac{-3}{p}\right)$.
\\
$f(z) =
q + 3iq^2 - 5q^4 + 6iq^5 + 11q^7 - 3iq^8 - 18q^{10}+\cdots
$
\\
Atkin Swinnerton-Dyer basis:
\\
$
\begin{array}{lll}
\text{if }p\equiv 1\mod 3   &\text{ basis is }& h_1, h_2\\
\text{if }p\equiv 2\mod 3   &\text{ basis is }& h_1\pm i\sqrt[3]{3}h_2
\end{array}
$
\\
\hline 
\hline 
{\bf 3b.}
Basis of $S_3(\Gamma_{9.6^3.3.2^3})$; $r=q^{1/3}$.
\\
$
h_1(z)
=
\displaystyle{
\sqrt[3]{
\frac{
\eta( \tau)^{7}\eta(2\tau)^{4}\eta(3\tau)^{11}}{\eta(6\tau)^{4}}}
=
r
 - \frac{7}{3} r^4
 - \frac{19}{9} r^7
 + \frac{193}{81} r^{10}
 + \frac{2306}{243} r^{13}+ \cdots}
$
\\
$
h_2(z)
=
\displaystyle{
\sqrt[3]{
\frac{\eta( \tau)^{11}\eta(3\tau)^{7}\eta(6\tau)^{4}}{\eta(2\tau)^{4}}}
=
r^2
 - \frac{11}{3} r^5
 + \frac{23}{9} r^8
 - \frac{13}{81} r^{11}
+\cdots
}
$
\\
The associated new form and the ASwD basis are given in\\
exactly the same way as in case 3a.\\
\hline
\end{tabular}
\caption{Modular forms for noncongruence subgroups, and
associated forms for congruence subgroups.}
\label{tab:summary3}
\end{table}


\begin{table}
\begin{tabular}{|l|}
\hline
{\bf 4a.}
Basis of $S_3(\Gamma_{9.6^4.1^3})$\\
$
h_1(z)
=
\displaystyle{
\sqrt[3]{\frac{\eta(z)^{13}\eta(6z)^{14}}{\eta(2z)^{2}\eta(3z)^7}}}
=
q - \frac {13} 3q^2 + \frac{32}{9}q^3 + \frac{670}{81}q^4 
- \frac{3577}{243}q^5 + \cdots
$
\\
$
h_2(z)
=
\displaystyle{
\sqrt[3]{\frac{\eta(z)^{14}\eta(6z)^{13}}{\eta(2z)^{7}\eta(3z)^2}}}
=
 q - \frac {14} 3q^2 + \frac {56} 9q^3 - \frac {58} {81}q^4 
+ \frac{266}{243}q^5 + \ldots
$
\\
Associated newform in $S_3 (\Gamma _0 (486), \chi)$,
where $\chi(\Frob_p)=\left(\frac{-3}{p}\right)$.
\\
$
f(z)=q - \sqrt{-2}q^{2} - 2q^{4} + 3\sqrt{-2}q^{5} - 7q^{7} + 2\sqrt{-2}q^{8} +
 6q^{10} - 3\sqrt{-2}q^{11} + 5q^{13}
$
\\
Atkin Swinnerton-Dyer basis:
\\
$
\begin{array}{lll}
\text{if }p\equiv 1\mod 3   &\text{ basis is }& h_1, h_2\\
\text{if }p\equiv 2\mod 3   &\text{ basis is }& h_1\pm \sqrt{-2}\sqrt[3]{3}h_2
\end{array}
$
\\
\hline
\hline
{\bf 4b.}
Basis of $S_3(\Gamma_{18.3^4.2^3})$; $r=q^{1/3}$:
\\
$
h_1(z)
=
\displaystyle{
\sqrt[3]{
\frac{\eta(2\tau)^{13}\eta(3\tau)^{14}}{\eta(6\tau)^{7}\eta( \tau)^{2}}}
= r + \frac{2}{3}r^4 - \frac{28}{9}r^7
- \frac{482}{81}r^{10} - \frac{736}{243}r^{13}
+\cdots
}
$
\\
$
h_2(z)
=
\displaystyle{
\sqrt[3]{
\frac{\eta(2\tau)^{14}\eta(3\tau)^{13}}{\eta(6\tau)^{2}\eta( \tau)^{7}}} 
= r^2 + 
\frac{7}{3}r^5 + \frac{14}{9}r^8
- \frac{148}{81}r^{11}
- \frac{1708}{243}r^{14}
+\cdots}
$
\\
The associated newform is the same as in case 4a.
\\
Atkin Swinnerton-Dyer basis:
$
\begin{array}{lll}
\text{if }p\equiv 1\mod 3   &\text{ basis is }& h_1, h_2\\
\text{if }p\equiv 2\mod 3   &\text{ basis is }& h_1\pm \sqrt{-2}\sqrt[3]{3}h_2
\end{array}
$
\\
\hline
\end{tabular}
\caption{Modular forms for noncongruence subgroups, and
associated forms for congruence subgroups. }
\label{tab:summary4}
\end{table}

\clearpage

\section{Description of  the noncongruence subgroups}
\label{S:desc}

\subsection{Beauville's families}
\label{SS:beauv}
We start with certain index $12$ genus $0$ torsion free congruence
subgroups of $\SL_2(\Z)$, listed
 in Table~\ref{table:dataforbeauvillecurves} \cite{Seb01}.
Figure~\ref{fig:index12fundomains}
shows corresponding fundamental domains
and generating matrices.

\begin{figure}
\label{fig:index12fundomains}
\input{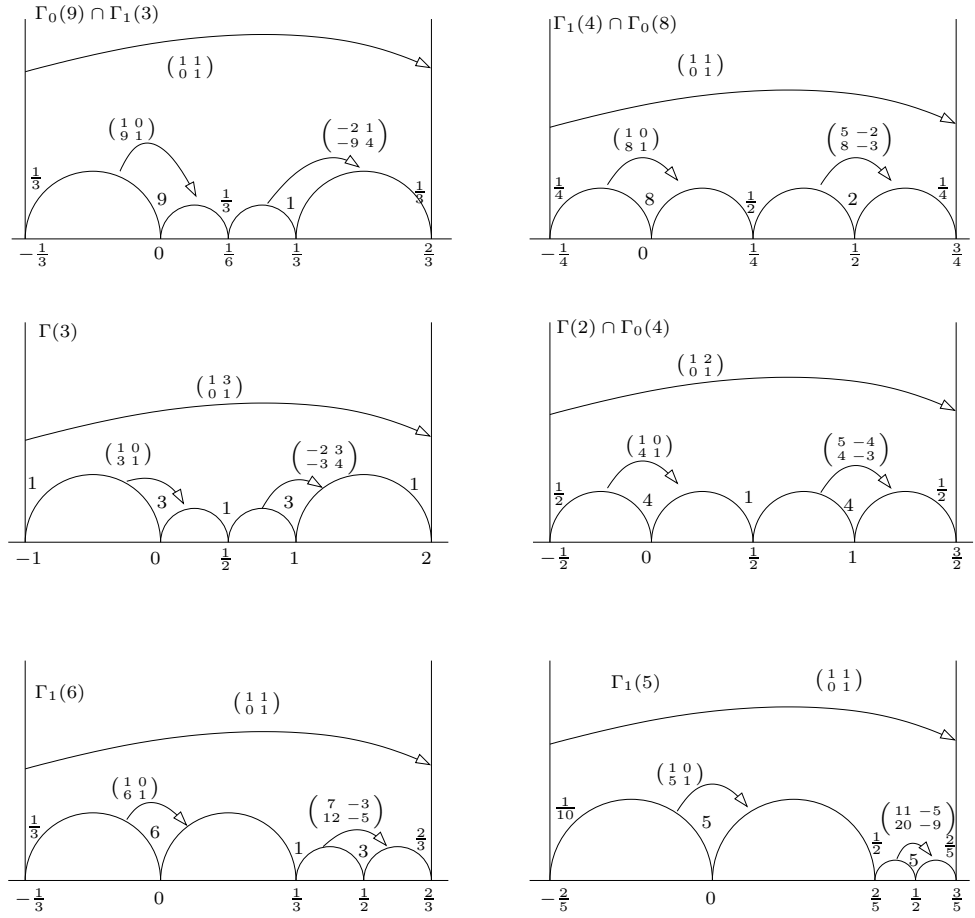}
\caption{Fundamental domains for torsion free
index $24$ congruence subgroups
in $\SL_2(\Z)$. }
\end{figure}

Table~\ref{table:dataforbeauvillecurves}
gives equations
for the associated families of elliptic curves
\cite{Beau82}.
Table~\ref{tab:weierstrassforbeauville}
gives the $a_1,\dots,a_5$
of the Weierstrass form
$y^2 + a_1xy + a_3y = x^3 + a_2x^2 + a_4x + a_6.$
The hauptmodul $t(\tau)$ listed
in the table is such that $j(E_{t(\tau)})=j(\tau)$.

\begin{table}
$
\renewcommand\arraystretch{1.5}
\begin{array}{ccc}
\text{group}&
\text{elliptic family} & 
j-\text{ invariant} \\
\hline
\Gamma(3) &    (x^3+y^3+z^3)=txyz        
&
\frac{t^3(t^3+216)^3}{(t^3-27)^3}
\\
\Gamma(2)\cap\Gamma_1(4) &  x(x^2+z^2+2zy)=tz(x^2-y^2)  
&
\frac{(t^4 - t^2 + 1)^3}{ t^4 (t - 1)^2 (t + 1)^2}
\\
\Gamma^1(5) &  x(x-z)(y-z)t=y(y-x)z    
&
-\frac{(t^4 + 12t^3 + 14t^2 - 12t + 1)^3}
{ t^5 (t^2 + 11t - 1)}
\\
\Gamma_1(6) & (xy + yx + zx)(x+y+z)=txyz 
&
\frac{(3t - 1)^3 (3t^3 - 3t^2 + 9t - 1)^3}{ (t - 1)^3 t^6 (9t - 1)}
\\
\Gamma_0(8)\cap\Gamma_1(4) &  (x+y)(xy + z^2)t=4xyz   
&
-16\frac{(t^4 -16 t^2 + 16)^3}{ t^8(t+1)(t-1)}
\\
\Gamma_0(9)\cap\Gamma_1(3) &  (x^2y + y^2z+z^2x)=txyz 
&
\frac{t^3(t^3-24)^3}{t^3 - 27}
\\
\hline
\end{array}
$
\caption{Data for Beauville's elliptic surfaces.}
\label{table:dataforbeauvillecurves}
\end{table}

\begin{table}
$
\renewcommand\arraystretch{1.5}
\begin{array}{|c|ccccc|c|}
\hline
\text{level}&
 \multicolumn{5}{c|}{\text{Coefficients of Weierstrass form}} &
t \text{ as a }\\
 & a_1 & a_2 & a_3 & a_4 & a_6&\text{Hauptmodul}\\
\hline
3
&
0
&
t^2
&
0
&
-72t
&
-8(4t^2 + 27)
&
\frac{\eta{\left(\frac{1}{3}\tau\right)^{3}}}{\eta(3\tau)^3}
+3
\rule{0ex}{4ex}
\\
4
&
0
&4 + 4t^2
&0
&16t^2
&0
&
\frac{1}{2}\frac{\eta(\tau)^{12}} 
{\eta(2\tau)^{8}\eta\left(\frac{1}{2}
\tau\right)^{4}}
\rule{0ex}{4ex}
\\
5
&t+1 & t & t &0&0
&
q^{\frac 1 5}
{\displaystyle{\prod_{\stackrel{n=0}{e=1,-1}}^\infty}} \left(
\frac{\left(1-q^{n+e\frac 1 5}\right)}
{\left(1-q^{n+e\frac 2 5}\right)}\right)^5
\rule{0ex}{4ex}
\\
6
&t+1  
&t-t^2
&t-t^2
&0    
&0
&
\frac{1}{9}
\frac{\eta(6\tau)^{4}\eta(\tau)^{8}}  
{\eta(3\tau)^{8}\eta(2\tau)^{4}}
\rule{0ex}{4ex}
\\
8
&4
&t^2
&4t^2
&0
&0
&
\frac{\eta(z)^8\eta(4z)^4}{\eta(2z)^{12}}
\\
9
&0
&t^2
&0
&8t
&16
&
\rule{0ex}{4ex}
27\frac{\eta(9\tau)^3}{\eta(\tau)^{3}} + 3
\\
\hline
\end{array}
$
\caption{Weierstrass equations for Beauville's elliptic families.}
\label{tab:weierstrassforbeauville}
\end{table}

\begin{figure}
$$\input{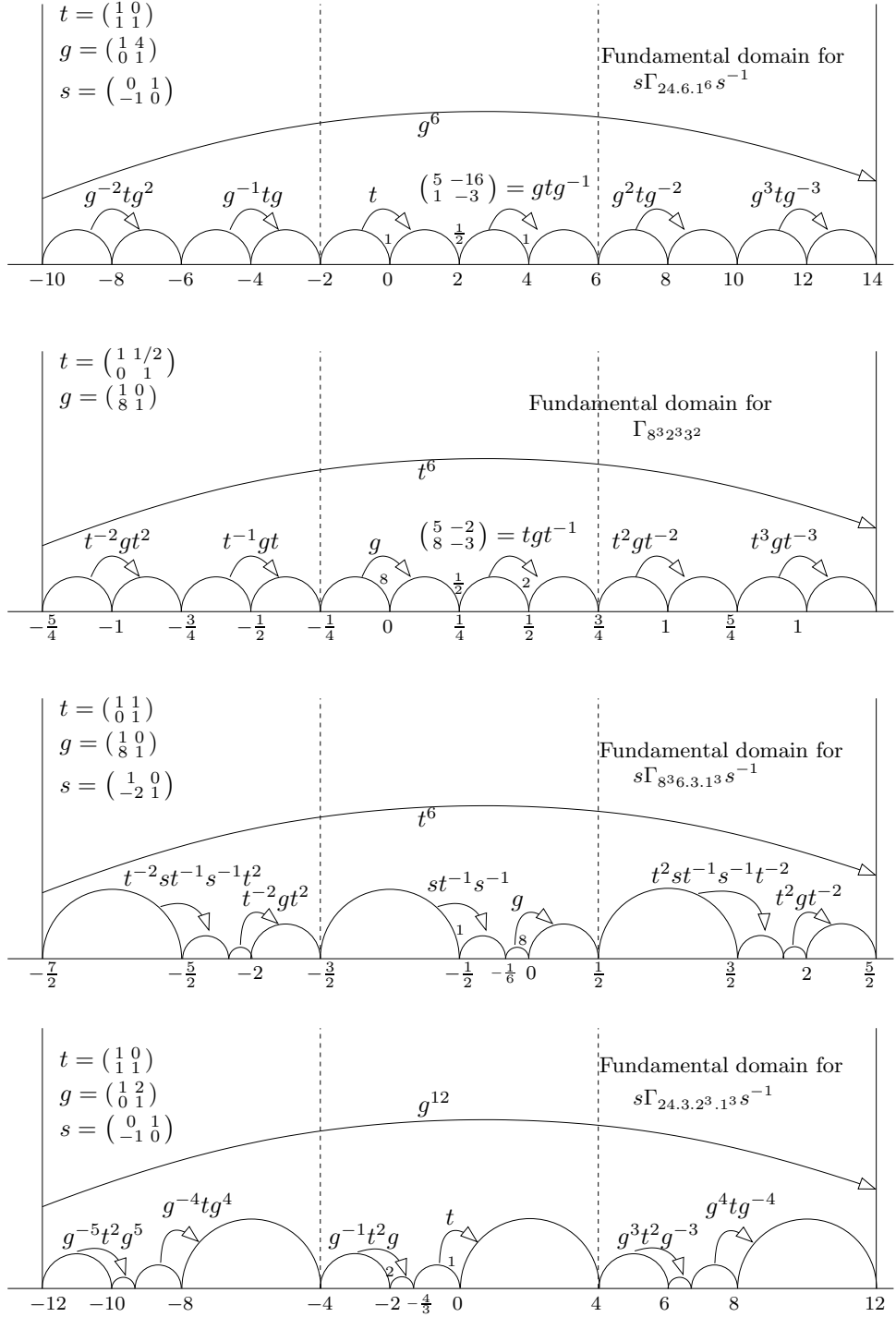}$$
\caption{Fundamental domains for conjugates of some
index $3$ subgroups of $\Gamma_0(8)\cap\Gamma_1(4)$.}
\label{level8subgroups}
\end{figure}

\begin{figure}
$$\input{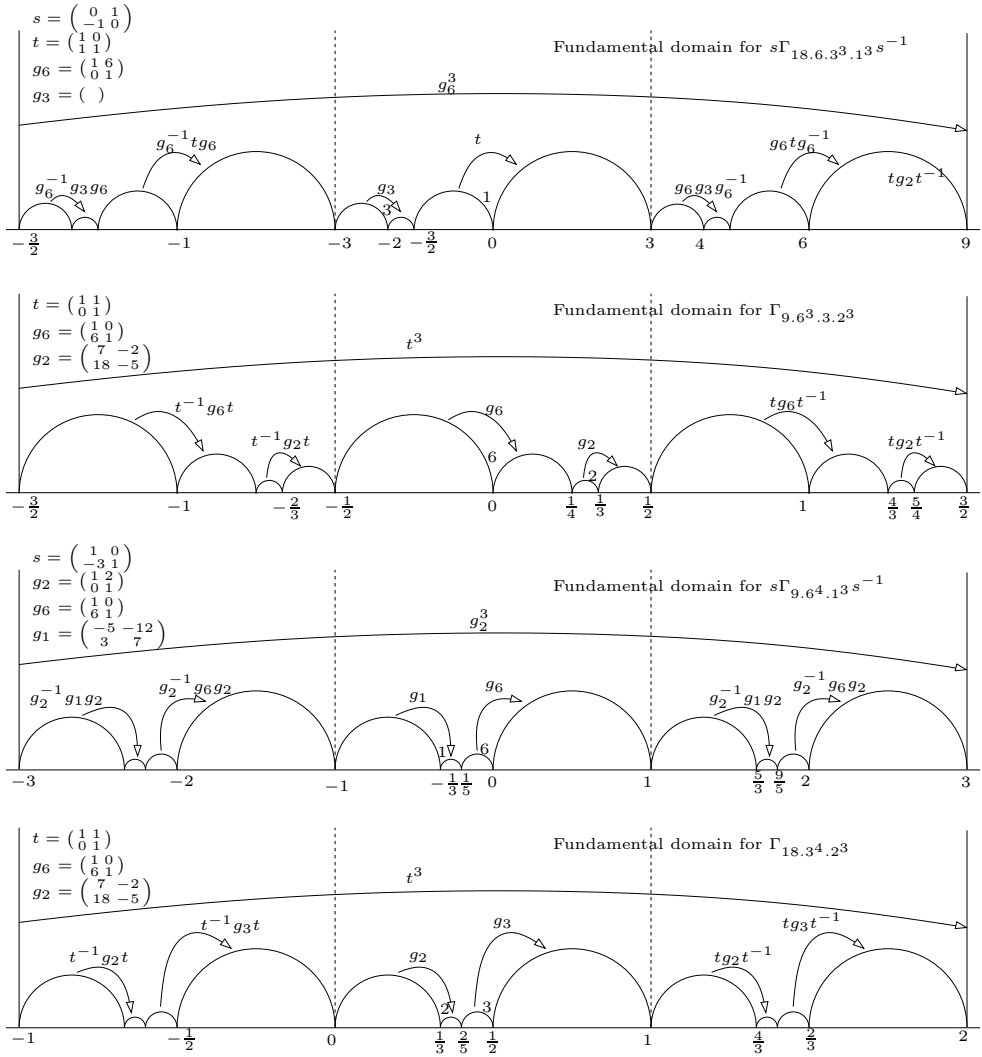}$$
\caption{Fundamental domains for conjugates of some
index $3$ subgroups of $\Gamma_1(6)$.}
\label{level6subgroups}
\end{figure}

\clearpage

\subsection{The noncongruence subgroups}
\label{SS:subgrp}

We will work with certain index $3$ normal subgroups of 
$\Gamma_1(6)$ and $\Gamma_0(8)\cap\Gamma_1(4)$.
The case $\Gamma_1(5)$ has been studied in \cite{LLY}.
The fundamental domain of $\Gamma$
is a union of three copies of a fundamental domain 
for $G$, corresponding to the three cosets of $\Gamma$ in $G$.
From the fundamental domains, shown in
Figures~\ref{level8subgroups} and \ref{level6subgroups},
we obtain generators
and cusp widths \cite{kulkarni},
allowing us to make the following definition.

\begin{table}
$$
\begin{array}{|l|cccc|}
\hline
\multicolumn{5}{|c|}{\text{cusps and subgroups of $\Gamma_0(8)
\cap\Gamma_1(4)$}}\\
\hline
\raisebox{-1ex}{\rule{0ex}{3.5ex}}
\text{cusp }\tau \hspace{2cm}
& \infty & 0 & \frac{1}{2} & \frac{1}{4}\\
\hline
\text{width}
\rule{0ex}{3ex} & 1 & 8 & 2 & 1\\
\hline
\hline
\text{subgroup} &
\multicolumn{4}{|c|}{\text{ramified cusps}}\\
&\multicolumn{4}{|c|}{\text{indicated by \checkmark}}\\
\hline
\Gamma_{24.6.1^6} & &\checkmark&\checkmark&\\
\Gamma_{8^3.2^3.3^2}   &\checkmark &&&\checkmark \\
\Gamma_{8^3.6.3.1^3}    & & &\checkmark &\checkmark \\
\Gamma_{24.3.2^3.1^3}  &&\checkmark & &\checkmark 
\raisebox{-1ex}{\rule{0ex}{3.5ex}}
\\
\hline
\end{array}
\hspace{1ex}
\begin{array}{|l|cccc|}
\hline
\multicolumn{5}{|c|}{\text{cusps and subgroups of $\Gamma_1(6)$}}\\
\hline
\raisebox{-1ex}{\rule{0ex}{3.5ex}}
\text{cusp }\tau & \infty & 0 & \frac{1}{2} & \frac{1}{3}\\
\hline
\text{width}
\rule{0ex}{3ex} & 1 & 6 & 3 & 2\\
\hline
\hline
\text{subgroup} &
\multicolumn{4}{|c|}{\text{ramified cusps}}\\
&\multicolumn{4}{|c|}{\text{indicated by \checkmark}}\\
\hline
\Gamma_{18.6.3^3.1^3} & &\checkmark&&\checkmark\\
\Gamma_{9.6^3.3.2^3}  &\checkmark & &\checkmark &\\
\Gamma_{9.6^4.1^3}    & & &\checkmark &\checkmark \\
\Gamma_{18.3^4.2^3}   &\checkmark &\checkmark & &
\raisebox{-1ex}{\rule{0ex}{3.5ex}}\\
\hline
\end{array}
$$
\caption{Ramification points of triple covers of 
$X(\Gamma_0(8)\cap\Gamma_1(4))$ 
and $X(\Gamma_1(6))$, with corresponding subgroups.
}
\label{table:choices_of_ramification}
\end{table}

\begin{Def}
\label{def:8groups}
We let $\Gamma_{24.6.1^6}$,
$\Gamma_{8^36.3.1^3}$,
$\Gamma_{24.3.2^3.1^3}$,
$\Gamma_{8^32^33^2}$
be index $3$ genus $0$ subgroups of $\Gamma_0(8)\cap\Gamma_1(4)$,
and 
$\Gamma_{18.6.3^3.1^3},
\Gamma_{9.6^4.1^3},
\Gamma_{9.6^3.3.2^3}$,
$\Gamma_{18.3^4.2^3}$
index $3$ genus $0$ subgroups of $\Gamma_1(6)$,
defined by their generators as follows:
$$
\renewcommand\arraystretch{1.5}
\begin{array}{ll}
\Gamma & \multicolumn{1}{c}{\text{generators}}\\
\hline
\Gamma_{24.6.1^6}&
\left(\begin{smallmatrix}1 & 0\\ 24 & 1\end{smallmatrix}\right),
\left(\begin{smallmatrix}9  & -1\\ 64 & -7\end{smallmatrix}\right),
\left(\begin{smallmatrix}5  & -1\\ 16 & -3\end{smallmatrix}\right),
\left(\begin{smallmatrix}1  &  1\\ 0  &  1\end{smallmatrix}\right),
\left(\begin{smallmatrix}-3 & -1\\ 16 &  5\end{smallmatrix}\right),
\left(\begin{smallmatrix}-7 & -1\\ 64 &  9\end{smallmatrix}\right),
\left(\begin{smallmatrix}-11& -1\\ 144& 13\end{smallmatrix}\right).
\\
\Gamma_{8^32^33^2}&
\left(\begin{smallmatrix}1& 3\\ 0& 1     \end{smallmatrix}\right),
\left(\begin{smallmatrix}-7& -8\\ 8& 9   \end{smallmatrix}\right),
\left(\begin{smallmatrix}-3& -2\\ 8& 5   \end{smallmatrix}\right),
\left(\begin{smallmatrix}1& 0\\ 8& 1     \end{smallmatrix}\right),
\left(\begin{smallmatrix}5& -2\\ 8& -3   \end{smallmatrix}\right),
\left(\begin{smallmatrix}9& -8\\ 8& -7   \end{smallmatrix}\right),
\left(\begin{smallmatrix}13& -18\\ 8& -11\end{smallmatrix}\right).
\\
\Gamma_{8^36.3.1^3}&
\left(\begin{smallmatrix}-11& 6 \\ -24& 13 \end{smallmatrix}\right),
\left(\begin{smallmatrix}41& -25\\ 64& -39 \end{smallmatrix}\right),
\left(\begin{smallmatrix}49& -32\\ 72& -47 \end{smallmatrix}\right),
\left(\begin{smallmatrix}1& 1  \\ 0& 1    \end{smallmatrix}\right),
\left(\begin{smallmatrix}1& 0   \\ 8& 1    \end{smallmatrix}\right),
\left(\begin{smallmatrix}25& -9 \\ 64& -23 \end{smallmatrix}\right),
\left(\begin{smallmatrix}81& -32\\ 200& -79\end{smallmatrix}\right).
\\
\Gamma_{24.3.2^3.1^3}&
\left(\begin{smallmatrix}1& 0  \\ 24& 1   \end{smallmatrix}\right),
\left(\begin{smallmatrix}21& -2\\ 200& -19 \end{smallmatrix}\right),
\left(\begin{smallmatrix}9& -1 \\ 64& -7   \end{smallmatrix}\right),
\left(\begin{smallmatrix}5& -2 \\ 8& -3    \end{smallmatrix}\right),
\left(\begin{smallmatrix}1& 1 \\ 0& 1     \end{smallmatrix}\right),
\left(\begin{smallmatrix}-11&-2\\ 72& 13   \end{smallmatrix}\right),
\left(\begin{smallmatrix}-7& -1\\ 64& 9    \end{smallmatrix}\right).
\\
\hline
\Gamma_{18.6.3^3.1^3}&
\left(\begin{smallmatrix}1& 0\\ 18& 1    \end{smallmatrix}\right),
\left(\begin{smallmatrix}25& -3\\ 192& -23\end{smallmatrix}\right),
\left(\begin{smallmatrix}7& -1\\ 36& -5   \end{smallmatrix}\right),
\left(\begin{smallmatrix}7& -3\\ 12& -5   \end{smallmatrix}\right),
\left(\begin{smallmatrix}1& 1\\ 0& 1     \end{smallmatrix}\right),
\left(\begin{smallmatrix}-11& -3\\ 48& 13 \end{smallmatrix}\right),
\left(\begin{smallmatrix}-5& -1\\ 36& 7   \end{smallmatrix}\right).
\\
\Gamma_{9.6^3.3.2^3}&
\left(\begin{smallmatrix}1& 3\\ 0& 1     \end{smallmatrix}\right),
\left(\begin{smallmatrix}-5& -6\\ 6& 7   \end{smallmatrix}\right),
\left(\begin{smallmatrix}-11& -8\\ 18& 13\end{smallmatrix}\right),
\left(\begin{smallmatrix}1& 0\\ 6& 1     \end{smallmatrix}\right),
\left(\begin{smallmatrix}7& -2\\ 18& -5  \end{smallmatrix}\right),
\left(\begin{smallmatrix}7& -6\\ 6& -5   \end{smallmatrix}\right),
\left(\begin{smallmatrix}25& -32\\ 18& -23\end{smallmatrix}\right).
\\
\Gamma_{9.6^4.1^3}&
\left(\begin{smallmatrix}-17& 6  \\ -54& 19      \end{smallmatrix}\right),
\left(\begin{smallmatrix}127& -49\\ 324& -125\end{smallmatrix}\right),
\left(\begin{smallmatrix}61 & -24\\ 150& -59   \end{smallmatrix}\right),
\left(\begin{smallmatrix}1  & 1  \\ 0  & 1         \end{smallmatrix}\right),
\left(\begin{smallmatrix}1  & 0  \\ 6  & 1         \end{smallmatrix}\right),
\left(\begin{smallmatrix}91 & -25\\ 324& -89  \end{smallmatrix}\right),
\left(\begin{smallmatrix}85 & -24\\ 294& -83  \end{smallmatrix}\right).
\\
\Gamma_{18.3^4.2^3}&
\left(\begin{smallmatrix}1& 3\\ 0& 1     \end{smallmatrix}\right),
\left(\begin{smallmatrix}-11& -8\\ 18& 13\end{smallmatrix}\right),
\left(\begin{smallmatrix}-5& -3\\ 12& 7  \end{smallmatrix}\right),
\left(\begin{smallmatrix}7& -2\\ 18& -5  \end{smallmatrix}\right),
\left(\begin{smallmatrix}7& -3\\ 12& -5  \end{smallmatrix}\right),
\left(\begin{smallmatrix}25& -32\\ 18& -23\end{smallmatrix}\right),
\left(\begin{smallmatrix}19& -27\\ 12& -17\end{smallmatrix}\right).\\
\hline
\end{array}
$$
\label{def:gamma}
\end{Def}

By comparing cusp widths, 
in Tables~\ref{tab:cuspvaluesG8} and
\ref{tab:cuspvaluesforlevel6}, with possible cusp
widths of congruence subgroups
in Table~\ref{table:sebbardata},
we obtain the following result.

\begin{thm}
The groups in Definition~\ref{def:8groups}
 are noncongruence subgroups.
\end{thm}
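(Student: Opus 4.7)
The plan is to combine Wohlfahrt's theorem with the classification summarized in Table~\ref{table:sebbardata}. Recall Wohlfahrt: any congruence subgroup $\Gamma \subseteq \SL_2(\Z)$ whose generalized level $N$, defined as the least common multiple of its cusp widths, is $N$ must already contain the principal congruence subgroup $\Gamma(N)$. Consequently, the multiset of cusp widths of a congruence subgroup is a rigid invariant: if no congruence subgroup of the relevant index and level has the same cusp-width profile as one of our $\Gamma$, then $\Gamma$ cannot be congruence.

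First I would read off the cusp-width multiset of each of the eight groups in Definition~\ref{def:8groups} from the fundamental-domain pictures in Figures~\ref{level8subgroups} and~\ref{level6subgroups} (these are also collected in Tables~\ref{tab:cuspvaluesG8} and~\ref{tab:cuspvaluesforlevel6}). The widths are in fact encoded in each name: for instance $\Gamma_{24.6.1^6}$ has cusp widths $(24,6,1,1,1,1,1,1)$, summing to the index $36 = 3 \cdot 12$ in $\SL_2(\Z)$. Each $\Gamma$ is a normal subgroup of index $3$ in one of the torsion-free genus-$0$ Beauville groups, hence is itself torsion-free of genus $0$; if it were congruence, it would therefore appear, up to $\SL_2(\Z)$-conjugacy, in Sebbar's enumeration of torsion-free genus-$0$ congruence subgroups.

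The final step is then a line-by-line comparison: for each of the eight groups one scans the entries of Table~\ref{table:sebbardata} at index $36$ and checks that none of them realizes the given cusp-width multiset. For instance, no torsion-free genus-$0$ congruence subgroup of index $36$ has a single cusp of width $24$ accompanied by one cusp of width $6$ and six cusps of width $1$, ruling out $\Gamma_{24.6.1^6}$ immediately; the remaining seven cases are disposed of in exactly the same fashion. The only real obstacle here is the bookkeeping, namely verifying that each cusp width has been correctly extracted from the (in several cases conjugated) fundamental domains and that the comparison to Sebbar's table has been performed exhaustively; once this is done, the noncongruence conclusion is uniform and requires no further input.
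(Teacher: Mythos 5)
Your proposal is correct and is essentially the paper's own argument: the paper proves the theorem precisely by comparing the cusp widths of the eight groups (as recorded in Tables~\ref{tab:cuspvaluesG8} and~\ref{tab:cuspvaluesforlevel6}, and encoded in the group names) against Sebbar's list of possible cusp widths for index $36$ genus-zero torsion-free congruence subgroups in Table~\ref{table:sebbardata}, and observing that no entry matches. The appeal to Wohlfahrt's theorem is not needed as a separate ingredient, since Sebbar's classification already does the work.
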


\begin{table}
$$
\begin{array}{ll}
& 6-6-6-6-3-3-3-3\\
&9-9-9-3-3-1-1-1\\
&9-9-3-3-3-3-3-3\\
&10-10-5-5-2-2-1-1\\
&18-9-2-2-2-1-1-1\\
&27-3-1-1-1-1-1-1\\
\end{array}
$$
\caption{Possible cusp widths of index $36$ genus zero torsion free
subgroups of $\PSL_2(\Z)$, taken from \cite[\S7, Table 2]{Seb01}.}
\label{table:sebbardata}
\end{table}

\subsection{Hauptmoduln and covering maps}

Throughout this paper we fix our choice of identification of
$X(\Gamma_0(8)\cap\Gamma_1(4))$ and $X(\Gamma_1(6))$ with the projective line
$\PP^1$, with parameter $t_8$ and $t_6$ respectively.  
As functions of $z$ in the upperhalf complex plane, $t_8(z)$ and
$t_6(z)$ are given terms of the Dedekind eta function, as listed in the last column of
Table~\ref{tab:weierstrassforbeauville}:
$$t_8(z)=\frac{\eta(z)^8\eta(4z)^4}{\eta(2z)^{12}},\ \text{ and }\
t_6(z)=\frac{1}{9}\frac{\eta(6\tau)^{4}\eta(\tau)^{8}}{\eta(3\tau)^{8}\eta(2\tau)^{4}}.$$
The values of these functions at the cusps are as in 
Table~\ref{tab:valuesatcuspsforhauptmodul}. 
\begin{table}
$$
\begin{array}{|l|cccc|}
\hline
\multicolumn{5}{|c|}{\text{Values of } t_8}\\
\hline
\text{cusp } c & \infty & 0 & \frac{1}{2} &\frac{1}{4}\\
\hline
t_8(c)  & 1 & 0 & \infty & -1\\
\hline
\end{array}
\hspace{1cm}
\begin{array}{|l|cccc|}
\hline
\multicolumn{5}{|c|}{\text{Values of } t_6}\\
\hline
\text{cusp } c & \infty & 0 & \frac{1}{2} &\frac{1}{3}\\
\hline
t_8(c)  & \frac19 & 0 & 1 & \infty\\
\hline
\end{array}
$$
\caption{Values of Hauptmoduln at cusps.}
\label{tab:valuesatcuspsforhauptmodul}
\end{table}

 Since the ramification points of
the covering maps $\Gamma\setminus \HH\rightarrow G\setminus \HH$  are at cusps as  in Table~\ref{table:choices_of_ramification}, 
the covering maps are given in each case by a map
$$r\mapsto r^3=m(t),$$ where the maps $m$ corresponding to each of our
subgroups are as in Table~\ref{tab:coveringmaps}.  

\begin{table}[h]
\renewcommand\arraystretch{1.5}
$$
\begin{array}{|lll|}
\hline
\text{subgroup } & m(t) & m^{-1}(r^3)\\
\hline
\Gamma_{24.6.1^6}     &t&r^3\\
\Gamma_{8^3.2^3.3^2}  &\frac{1+t}{1-t}&\frac{r^3-1}{r^3+1}\\
 \Gamma_{8^3.6.3.1^3}  &\frac{t+1}{4}&4r^3-1\\
\Gamma_{24.3.2^3.1^3} &\frac{2(1+t)}{t}&\frac{2}{r^3-2}\\
\hline
\end{array}
\hspace{1cm}
\begin{array}{|lll|}
\hline
\text{subgroup } & m(t) & m^{-1}(r^3)\\
\hline
\Gamma_{18.6.3^3.1^3} &t/9&9r^3\\
 \Gamma_{9.6^3.3.2^3}  &\frac{1-9t}{3-3t}&\frac{1-3r^3}{9-3r^3}\\
\Gamma_{9.6^4.1^3}   &\frac{8}{3-3t}&1-\frac{8}{3r^3}\\
\Gamma_{18.3^4.2^3}   &\frac{1-9t}{24t}&\frac{1}{24r^3+9}\\
\hline
\end{array}       
$$
\caption{Covering maps corresponding to subgroups of $\Gamma_0(8)\cap\Gamma_1(4)$
and $\Gamma_1(6)$.}
\label{tab:coveringmaps}
\end{table}

\section{Constructing elements of $S_{3}(\Gamma)$}
\label{S:constructing}
\subsection{Dimension}
\label{SS:dim}
For odd $k$,
Shimura \cite[Theorem~2.25]{shimu} gives the following formula 
for $\dim S_k(\Gamma)$ for
a genus $g$ subgroup $\Gamma\notin -I$ of $\SL_2(\Z)$:
\[
\dim S_k (\Gamma) =  (k-1)(g-1) + \frac{1}{2}(k-2)u + \frac{1}{2}(k-1)u'
+\sum_{i=1}^rk\frac{e_i-1}{2e_i}.
\]
The $e_i$ are orders of elliptic points,
$u$ is the number of regular cusps, and $u'$ the number of
irregular cusps.  
Using this formula, we find that
\[
\dim S_3 (\Gamma) =  2,
\]
for $\Gamma$ equal to any of the groups in Definition~\ref{def:8groups}.

\subsection{Method of constructing elements of $S_{3}(\Gamma)$}
\label{SS:meth}
Suppose that $\Gamma$ has index $3$ in
 $G$, one of the groups in
Table~\ref{table:dataforbeauvillecurves}, 
and that the corresponding
covering is ramified at cusps $c_1$ and $c_2$.
Let $t$ be a Hauptmodul for $G$, e.g., as in \cite{CN}.
By a transformation, take $t$ with
$t(c_1)=0$ and $t(c_2)=\infty$.  Then $\sqrt[3]{t}$ is a
Hauptmodul for $\Gamma$.
Let
 $f\in M_3(G)$.  Then 
$\sqrt[3]{t}f\in A_3(\Gamma)$.  If $f$ is zero 
where $t$ has poles, then
$\sqrt[3]{t}f$ and $\sqrt[3]{t^2}f$ are in $S _3(\Gamma)$.
We give modular
forms in terms of the Dedekind eta function, using the data given
by Martin \cite{Martin}.
Explicit details of the forms and their poles and zeros
are given in Tables~\ref{tab:cuspvaluesG8} and \ref{tab:cuspvaluesforlevel6},
and the $q$-expansions are given in
Tables~\ref{tab:qexpansionslevel8} and \ref{tab:level6q-expansions}.
\clearpage

\begin{table}
$
\begin{array}{|l|c|c|c|c|c|}
\hline
 \multicolumn{2}{|r|}{\text{cusps (and widths)} }
&\multicolumn{1}{|c|}{\frac12 (2)} & 
\multicolumn{1}{|c|}{0 (8)} & \multicolumn{1}{|c|}{\infty (1)} & \multicolumn{1}{|c|}
{\frac14 (1)}
 \\
\cline{2-6}
\text{forms for } \Gamma_0(8)\cap\Gamma_1(4)
&\text{weight}
&\multicolumn{4}{|c|}{\parbox{1.2in}{order of vanishing}}\\
\cline{1-6}
\hline
t = \frac{\eta(z)^8\eta(4z)^4}{\eta(2z)^{12}}
=1 - 8q + 32q^2+\cdots&
0           & -1& 1 & \multicolumn{1}{|c|}{0} & \multicolumn{1}{|c|}{0} 
\\
\frac{t+1}{2}
=\frac{\eta(z)^4\eta(4z)^{14}}
{\eta(8z)^4\eta(2z)^{14}}=1 - 4q + 16q^2+\cdots&0
           & -1& 0 & \multicolumn{1}{|c|}{0} & \multicolumn{1}{|c|}{1} 
\\
\frac{t+1}{2t}=
\frac{\eta(4z)^{10}}
{\eta(8z)^4\eta(2z)^2\eta(z)^4}=1 + 4q + 16q^2+\cdots&0
          & 0& -1 & \multicolumn{1}{|c|}{0} & \multicolumn{1}{|c|}{1} 
\\
\frac{4(t+1)}{(1-t)}
=\frac{\eta(4z)^{12}}{\eta(8z)^8\eta(2z)^4}
   &0        & 0& 0 & \multicolumn{1}{|c|}{-1} & \multicolumn{1}{|c|}{1} 
\\
\hline
E_a 
=\frac{\eta(4z)^4\eta(2z)^6}{\eta(z)^4}
  & 3    & 1 & 0 & \multicolumn{1}{|c|}{1} & \multicolumn{1}{|c|}{1} 
\\
\multicolumn{1}{|l|}{E_b=\left(\frac{2t}{t+1}\right)E_a   
=\frac{\eta(2z)^8\eta(8z)^4}{\eta(4z)^{6}}}
   &3  & 1 & 1 & \multicolumn{1}{|c|}{1} & \multicolumn{1}{|c|}{0} 
\\ 
\hline
\end{array}
$
$
\begin{array}{|l|c|ccc|ccc|ccc|ccc|c|}
\hline
& \text{cusps}&
&1/2 &&& 0 && -\frac{1}{8}& \infty  & \frac{1}{8} & 
\frac{-1}{4} &\frac{1}{4} &  \frac{1}{12} \\
\text{forms for} &\text{width}& &6 &&& 24 && 1 & 1 & 1 &1 & 1 & 1 \\
\cline{2-14}
\Gamma_{24.6.1^6} &\text{weight} & 
\multicolumn{12}{|c|}{\text{order of vanishing of form at cusps}} \\
\cline{1-14}
\sqrt[3]{t}
      & 0           && -1&&&  1  && 0 & 0 & 0 & 0 & 0 & 0 \\
E_a   &3            && 3 &&&  0  && 1 & 1 & 1 & 1 & 1 & 1 \\
t^{1/3}E_a   &3     && 2 &&&  1  && 1 & 1 & 1 & 1 & 1 & 1 \\
t^{2/3}E_a   &3     && 1 &&&  2  && 1 & 1 & 1 & 1 & 1 & 1 \\
\hline
\hline
 & \text{cusps}&
&1/2 &&\frac{2}{5}& 0 &\frac{2}{3} &\frac{3}{8}
 & \infty      & \frac{5}{8} &  & 1/4 &  \\
\text{forms for}&\text{width}
&& 6 && 8 & 8 & 8 & 1 &1 & 1 & &3 &\\
\cline{2-14}
\Gamma_{8^36.3.1^3} &\text{weight} & 
\multicolumn{12}{|c|}{\text{order of vanishing of form at cusps}} \\
\cline{1-14}
r_1=\sqrt[3]{\frac{t+1}{2}}
      & 0           && -1 && 0 & 0 & 0  & 0 & 0 & 0 &  & 1 &  \\
E_b   &3            
&& 3 && 1 & 1 & 1 & 1 & 1 & 1 & &0& \\
r_1E_b   &3     && 2 &&   1  & 1 & 1 & 1 & 1 & 1 && 1& \\
r_1^2E_b   &3     && 1  &&  1  & 1 & 1  & 1 & 1 & 1 && 2& \\
\hline
\hline
 & \text{cusps}&
-\frac{1}{6}&\frac{1}{2} &\frac{1}{10}& &0 &
&-\frac{1}{8}& \infty &\frac{1}{8}  &  & 1/4 &\\
\text{forms for}&\text{width}&2& 2 &2&  & 24 &  & 1 &1 & 1 & &3 &\\
\cline{2-14}
\Gamma_{24.3.2^3.1^3} &\text{weight} & 
\multicolumn{12}{|c|}{\text{order of vanishing of form at cusps}} \\
\cline{1-14}
r_2=\sqrt[3]{\frac{(t+1)}{2t}}
      & 0           &0& 0 &0&  & -1 &   & 0 & 0 & 0 &  & 1 &  \\
E_b   &3            
&1& 1 &1 &  & 3 &  &1& 1 & 1 & &0& \\
r_2E_b
&3     &1& 1 &1&     & 2 &  & 1 & 1 & 1 && 1& \\
r_2^2E_b
&3     &1& 1  &1&    & 1 &   & 1 & 1 & 1 && 2& \\
\hline
\hline
 & \text{cusps}&
-\frac12&\frac12 &\frac{3}{2}& -1 &   0 &1& 
& \infty &  & &\frac{1}{4}& \\
\text{forms for}&\text{width}&2& 2 &2& 8 & 8 & 8 &  &3 &  & &3 &\\
\cline{2-14}
\Gamma_{8^32^33^2} &\text{weight} & 
\multicolumn{12}{|c|}{\text{order of vanishing of form at cusps}} \\
\cline{1-14}
r_3=\sqrt[3]{\frac{4(t+1)}{(t-1)}}
             & 0   &0& 0 &0& 0 & 0 & 0 &  & -1&  & & 1 &  \\
E_b        &3    &1& 1 &1&1  & 1 & 1 &  & 3 &  & & 0 & \\
r_3E_b  &3    &1& 1 &1& 1 & 1 & 1 &  & 2 &  & & 1 & \\
r_3^2E_b &3    &1& 1 &1& 1 & 1 & 1 &  & 1 &  & & 2 & \\
\hline
\end{array}
$
\caption{Orders of vanishingat cusps for forms for 
$\Gamma_0(8)\cap\Gamma_1(4)$ and for 
subgroups of 
$\Gamma_0(8)\cap\Gamma_1(4)$.  
}
\label{tab:cuspvaluesG8}
\end{table}

\clearpage
\begin{table}[p]
$
\begin{array}{|l|c|c|c|c|c|}
\hline
\multicolumn{2}{|r|}{\text{cusps (and widths)}} & \infty (1)& 0 (6) & \frac12 (3) 
& \frac13 (2)\\
\cline{2-6}
\text{forms for } \Gamma_1(6)
&\text{weight}&\multicolumn{4}{|c|}{\parbox{1.2in}{order of vanishing}}\\
\hline
\multicolumn{1}{|l|}{a=\frac{\eta(z)\eta(6z)^6}{\eta(2z)^2\eta(3z)^3}=q - q^2 + q^3 + q^4 + \cdots }&1 & 1 & 0 & 0 & 0\\
\multicolumn{1}{|l|}{b=\frac{\eta(2z)\eta(3z)^6}{\eta(z)^2\eta(6z)^3}=1 + 2q + 4q^2 + 2q^3 + \cdots}&1 & 0 & 0 & 0 & 1\\
\multicolumn{1}{|l|}{c=\frac{\eta(3z)\eta(2z)^6}{\eta(6z)^2\eta(z)^3}=1 + 3q + 3q^2 + 3q^3 + \cdots}&1 & 0 & 0 & 1 & 0\\
\multicolumn{1}{|l|}{d=\frac{\eta(6z)\eta(z)^6}{\eta(3z)^2\eta(2z)^3}=1 - 6q + 12q^2 - 6q^3\cdots}&1 & 0 & 1 & 0 & 0\\
\hline
\multicolumn{1}{|l|}{r_0 = b/d= 1 + 8q + 40q^2 + 152q^3 + \cdots }&0& 0  & -1 & 0 & 1\\
\multicolumn{1}{|l|}{r_1 = b/c=8\frac{r_0}{(9r_0-1)}   = 1 - q + 4q^2 + \cdots   }&0& 0  &  0 &-1 & 1\\
\multicolumn{1}{|l|}{r_2 = a/c=\frac{(r_0-1)}{(9r_0-1)}= q - 4q^2 + 10q^3 \cdots         }&0& 1  &  0 &-1 & 0\\
\multicolumn{1}{|l|}{r_3 = a/d=\frac{1}{8}(r_0-1)       = q + 5q^2 + 19q^3 \cdots         }&0& 1  & -1 & 0 & 0\\
\hline
\multicolumn{1}{|l|}{acd=q - 4q^2 + q^3 + 16q^4 +\cdots         }&3&
 1 & 1 & 1 & 0\\
\multicolumn{1}{|l|}{bcd= 1 - q - 5q^2 - q^3 + 11q^4 +\cdots   }&3&
 0 & 1 & 1 & 1\\
\hline
\end{array}
$
$
\begin{array}{|l|c|ccc|ccc|ccc|ccc|c|}
\hline
& \text{cusps}& \frac{1}{6} &\infty  & -\frac{1}{6} 
&& 0 &&  \frac{1}{8} & \frac{1}{2}   &
 -\frac{1}{4} 
 &  &\frac{1}{3}& \\
\text{forms for} &\text{width}& 1&1 &1&& 18 &&  3& 3 &3  & & 6 & \\
\cline{2-14}
\Gamma_{18.6.3^3.1^3}
&\text{weight} & 
\multicolumn{12}{|c|}{\text{order of vanishing of form at cusps}} \\
\cline{1-14}
\sqrt[3]{b/d} & 0           &0&0&0 &&-1&& 0&0&0 &&1&\\
acd           &3            & 1&1&1 && 3&&  1&1&1 &&0&\\
(\sqrt[3]{b/d})    acd  &3  & 1&1&1 && 2&&  1&1&1 &&1&\\
(\sqrt[3]{b/d})^2acd  &3    & 1&1&1 && 1&&  1&1&1 &&2&\\
\hline
\hline
& \text{cusps}&\frac{5}{18}&\infty  &\frac{7}{18}&\frac{2}{5}& 0 &
\frac{2}{7}& & \frac{1}{2}   & &  &\frac{1}{3}& \\
\text{forms for} &\text{width}& 1&1 &1&6& 6 &6&  & 9 &  & & 6 & \\
\cline{2-14}
\Gamma_{9.6^4.1^3}
&\text{weight} & 
\multicolumn{12}{|c|}{\text{order of vanishing of form at cusps}} \\
\cline{1-14}
\sqrt[3]{b/c} & 0           & 0&0  &0&0&  0 &0&&-1 &&& 1&\\
acd           &3            & 1&1&1 &1& 1&1&  &3& &&0&\\
(\sqrt[3]{b/c})acd &3         & 1&1&1 &1&1&1&  &2& &&1&\\
(\sqrt[3]{b/c})^2acd &3       & 1&1&1 &1&1&1&  &1& &&2&\\
\hline
\hline
& \text{cusps}&&\infty  &&-1& 0 &1& & \frac{1}{2}   & & 
-\frac{2}{3}  &\frac{1}{3}&  \frac{4}{3} \\
\text{forms for} &\text{width}& &3 &&6& 6 &6&  & 9 &  & 2& 2 &2 \\
\cline{2-14}
\Gamma_{9.6^3.3.2^3}
&\text{weight} & 
\multicolumn{12}{|c|}{\text{order of vanishing of form at cusps}} \\
\cline{1-14}
\sqrt[3]{a/c}
      & 0           & &1  &&0&  0 &0&&-1 &&0& 0&0\\
bcd    &3            & &0& &1& 1&1&  &3& &1&1&1\\
(\sqrt[3]{a/c})bcd &3       & &1& &1& 1&1&  &2& &1&1&1\\
(\sqrt[3]{a/c})^2bcd &3     & &2& &1& 1&1&  &1& &1&1&1\\
\hline
\hline
& \text{cusps}&&\infty  &&& 0 && -\frac{1}{2}& \frac{1}{2}& \frac{3}{2}  & 
  -\frac{2}{3} &\frac{1}{3}&  \frac{4}{3} \\
\text{forms for} &\text{width}& &3 &&& 18 &&  3& 3 &3  & 2& 2 &2 \\
\cline{2-14}
\Gamma_{18.3^4.2^3}
&\text{weight} & 
\multicolumn{12}{|c|}{\text{order of vanishing of form at cusps}} \\
\cline{1-14}
\sqrt[3]{a/d} & 0   & &1  &&& -1 &&0& 0 &0&0& 0&0\\
bcd           &3    & &0& && 3&&  1&1&1 &1&1&1\\
(\sqrt[3]{a/d})bcd  &3   & &1& && 2&&  1&1&1 &1&1&1\\
(\sqrt[3]{a/d})^2bcd  &3 & &2& && 1&&  1&1&1 &1&1&1\\
\hline
\end{array}
$
\caption{Orders of vanishing at cusps for forms for subgroups of 
$\Gamma_1(6)$. 
}
\label{tab:cuspvaluesforlevel6}
\end{table}

\begin{table}[p]
\renewcommand\arraystretch{1.5}
$
\begin{array}{|ll|}
\hline
\Gamma_{24.6.1^6}& \\
\sqrt[3]{\eta( \tau)^{-4}\eta(2\tau)^{6}\eta(4\tau)^{16}} &=q + \frac{4}{3}q^2 + \frac{8}{9}q^3 + \frac{176}{81}q^4 - \frac{850}{243}q^5 - \frac{3488}{729}q^6 - \frac{5968}{6561}q^7 + \cdots\\
\sqrt[3]{\eta( \tau)^{4}\eta(2\tau)^{-6}\eta(4\tau)^{20}} &=
q - \frac{4}{3}q^2 + \frac{8}{9}q^3 - \frac{176}{81}q^4 - \frac{850}{243}q^5 + \frac{3488}{729}q^6 - \frac{5968}{6561}q^7 + \cdots
\\            
\hline
\hline
\Gamma_{8^36.3.1^3}& \\
\sqrt[3]{\eta( \tau)^{4}\eta(2\tau)^{10}\eta(4\tau)^{-4}\eta(8\tau)^{8}}
&=q - \frac{4}{3}q^2 - \frac{40}{9}q^3 + \frac{400}{81}q^4 + \frac{1454}{243}q^5 - \frac{1888}{729}q^6 - \frac{13168}{6561}q^7 + \cdots
\\
\sqrt[3]{\eta( \tau)^{8}\eta(2\tau)^{-4}\eta(4\tau)^{10}\eta(8\tau)^{4}}
&=q - \frac{8}{3}q^2 + \frac{8}{9}q^3 + \frac{32}{81}q^4 - \frac{82}{243}q^5 + \frac{5440}{729}q^6 - \frac{24400}{6561}q^7 + \cdots
\\
\hline
\hline
\Gamma_{24.3.2^3.1^3}&\\
\sqrt[3]{\eta( \tau)^{-4}\eta(2\tau)^{22}\eta(4\tau)^{-8}\eta(8\tau)^{8}}
&=
q + \frac{4}{3}q^2 - \frac{40}{9}q^3 - \frac{400}{81}q^4 + \frac{1454}{243}q^5 + \frac{1888}{729}q^6 - \frac{13168}{6561}q^7 + \cdots
\\
\sqrt[3]{\eta( \tau)^{-8}\eta(2\tau)^{20}\eta(4\tau)^{2}\eta(8\tau)^{4}}
&=q + \frac{8}{3}q^2 + \frac{8}{9}q^3 - \frac{32}{81}q^4 - \frac{82}{243}q^5 - \frac{5440}{729}q^6 - \frac{24400}{6561}q^7 + \cdots
\\
\hline
\hline
\Gamma_{8^32^33^2}&\\
\sqrt[3]{\eta(2\tau)^{20}\eta(4\tau)^{-6}\eta(8\tau)^{4}} &=
q^{2/3} - \frac{20}{3}q^{8/3} + \frac{128}{9}q^{14/3} - \frac{400}{81}q^{20/3} + \cdots\\
\sqrt[3]{\eta(2\tau)^{16}\eta(4\tau)^{6}\eta(8\tau)^{-4}} &=
q^{1/3} - \frac{16}{3}q^{7/3} + \frac{38}{9}q^{13/3} + \frac{1696}{81}q^{19/3}
 + \cdots\\
\hline
\end{array}
$
\caption{$q$-expansions of basis of forms for $S_3(\Gamma)$ for 
four subgroups of $\Gamma_0(8)\cap\Gamma_1(4)$
}
\label{tab:qexpansionslevel8}
\end{table}
\begin{table}[p]
\renewcommand\arraystretch{1.5}
$$
\begin{array}{|ll|}
\hline
{\Gamma_{18.6.3^3.1^3}}&\\
ab^{1/3}cd^{2/3}& =\sqrt[3]{\eta( \tau)^{4}\eta(2\tau)^{7}\eta(3\tau)^{-4}\eta(6\tau)^{11}}   
= q - \frac{4}{3}q^2 - \frac{31}{9}q^3 + \frac{400}{81}q^4 + \frac{104}{243}q^5+\cdots\\
ab^{2/3}cd^{1/3}& =\sqrt[3]{\eta( \tau)^{-4}\eta(2\tau)^{11}\eta(3\tau)^{4}\eta(6\tau)^{7}}   
= q + \frac{4}{3}q^2 - \frac{7}{9}q^3 - \frac{112}{81}q^4 - \frac{616}{243}q^5 +\cdots\\ 
\hline
\hline{\Gamma_{9.6^4.1^3}}&\\ 
ab^{1/3}c^{2/3}d& =\sqrt[3]{\eta( \tau)^{13}\eta(2\tau)^{-2}\eta(3\tau)^{-7}\eta(6\tau)^{14}} 
= q - \frac{13}{3}q^2 + \frac{32}{9}q^3 + \frac{670}{81}q^4 - \frac{3577}{243}q^5+\cdots\\
ab^{2/3}c^{1/3}d& =\sqrt[3]{\eta( \tau)^{14}\eta(2\tau)^{-7}\eta(3\tau)^{-2}\eta(6\tau)^{13}} 
= q - \frac{14}{3}q^2 + \frac{56}{9}q^3 - \frac{58}{81}q^4 + \frac{266}{243}q^5 +\cdots\\ 
\hline 
\hline
{\Gamma_{9.6^3.3.2^3}}&\\ 
a^{1/3}bc^{2/3}d& =\sqrt[3]{\eta( \tau)^{7}\eta(2\tau)^{4}\eta(3\tau)^{11}\eta(6\tau)^{-4}}   
= q^{ \frac{1}{3}} - \frac{7}{3}q^{ \frac{4}{3}} - \frac{19}{9}q^{ \frac{7}{3}} 
+ \frac{193}{81}q^{ \frac{10}{3}} + \frac{2306}{243}q^{ \frac{13}{3}}+\cdots\\
a^{2/3}bc^{1/3}d& =\sqrt[3]{\eta( \tau)^{11}\eta(2\tau)^{-4}\eta(3\tau)^{7}\eta(6\tau)^{4}} 
= q^{ \frac{2}{3}} - \frac{11}{3}q^{ \frac{5}{3}} + \frac{23}{9}q^{ \frac{8}{3}} 
- \frac{13}{81}q^{\frac{11}{3}} + \frac{2495}{243}q^{ \frac{14}{3}}+\cdots\\ 
\hline
\hline {{\Gamma_{18.3^4.2^3}}}&\\ 
a^{1/3}bcd^{2/3}& =\sqrt[3]{\eta( \tau)^{-2}\eta(2\tau)^{13}\eta(3\tau)^{14}\eta(6\tau)^{-7}} 
= q^{ \frac{1}{3}} + \frac{2}{3}q^{ \frac{4}{3}} - \frac{28}{9}q^{ \frac{7}{3}} 
- \frac{482}{81}q^{ \frac{10}{3}} - \frac{736}{243}q^{ \frac{13}{3}} +\cdots\\
a^{2/3}bcd^{1/3}& =\sqrt[3]{\eta( \tau)^{-7}\eta(2\tau)^{14}\eta(3\tau)^{13}\eta(6\tau)^{-2}} 
= q^{ \frac{2}{3}} + \frac{7}{3}q^{ \frac{5}{3}} + \frac{14}{9}q^{ \frac{8}{3}} 
- \frac{148}{81}q^{ \frac{11}{3}} - \frac{1708}{243}q^{ \frac{14}{3}}                     
+\cdots\\
\hline
\end{array}
$$
\caption{Basis of weight three cusp forms for 
some index $3$ subgroups of $\Gamma_1(6)$. $a,b,c,d$ are eta products as in
Table~\ref{tab:cuspvaluesforlevel6}.}
\label{tab:level6q-expansions}
\end{table}

\clearpage

\section{Traces and Point Counting}
\label{S:traces}
As described by Scholl, 
corresponding to each of these families, we have a representation
on parabolic cohomology:
\begin{equation}
\rho   = \rho_l:
\Gal(\overline \Q/\Q)\rightarrow H^1(X(\Gamma),
j_* R^1 f_* \Q _l).
\label{eqn:notationforgalrep}
\end{equation}
Here 
\[
 E^{\circ} (\Gamma ) \overset {f}{\longrightarrow} Y(\Gamma)
 \overset {j}{\hookrightarrow}X(\Gamma),
\]
with 
\[
 Y(\Gamma) = \Gamma \backslash \mathbf{H}, \quad
 X(\Gamma) = (\Gamma \backslash \mathbf{H})^*
\]
$E^{\circ} (\Gamma )$ be a family of elliptic curves over 
$Y(\Gamma)$. 
We let $\msr{F} = j_* R^1 f_* \Q _l$, an $l$-adic sheaf for the 
\'etale topology on $X(\Gamma)$.
We computed the traces of the Frobenius elements of 
this representation via point counting, as in \cite{LLY} and \cite{ALL}.
\subsection{Equations for elliptic surfaces associated with
the noncongruence subgroups}
\label{subsection:ellipticsurfacesnoncong}
As in section Section~\ref{SS:beauv},
associated to $\Gamma_0(8)\cap\Gamma_1(4)$ and $\Gamma_1(6)$, 
we have families of elliptic curves $E_8(t)$ and $E_6(t)$
as given in Table~\ref{tab:weierstrassforbeauville}:
\begin{eqnarray}
E_8(t): &&y^2 + 4xy + 4t^2y =x^3 + t^2x^2
\label{eqn:E8}\\
E_6(t): &&y^2 + (t+1)xy + (t-t^2)y =x^3 + (t-t^2)x^2.
\label{eqn:E6}
\end{eqnarray}
Thus we have elliptic surfaces $E_8$ and $E_6$, with fibrations
$$f_8:E_8\rightarrow X(\Gamma_0(8)\cap\Gamma_1(4))$$
and
$$f_6:E_6\rightarrow X(\Gamma_1(6)),$$
with fibres given by
$f_8^{-1}(t)=E_8(t)$ and $f_6^{-1}(t)=E_6(t)$.

By composing the covering maps given in
Table~\ref{tab:coveringmaps} with the fibrations $f_8$ or $f_6$,
associated with our noncongruence subgroups we have the
families of elliptic curves given in Table~\ref{tab:equationsofnonconfamilies}.
Our notation is explained by example:  The elliptic surface 
$E(\Gamma_{8^3.2^3.3^3})$
corresponding to
$\Gamma_{8^3.2^3.3^2}$ has a fibration
$$f: E(\Gamma_{8^3.2^3.3^3})
\rightarrow X(\Gamma_{8^3.2^3.3^3}),$$
with fiber $f^{-1}(r)$ having
an equation
$$y^2 + 4xy + 4\left(\frac{r^3-1}{r^3+1}\right)^2y
=x^3 + 4\left(\frac{r^3-1}{r^3+1}\right)^2x^2,
$$
i.e., the $t$ in (\ref{eqn:E8}) 
is replaced by $m^{-1}(r^3)=\frac{r^3-1}{r^3+1},$ where
$m(t)=\frac{1+t}{1-t}$.
This family of elliptic curve is denoted by $E_8\left(\frac{r^3-1}{r^3+1}\right)$.
The other families are constructed and denoted in a similar way.
\begin{table}
$
\renewcommand\arraystretch{1.5}
\begin{array}{|l|l|l|l|l|}
\cline{1-2}\cline{4-5}
\text{group} & \text{family of curves}&&
\text{group} & \text{family of curves}\\
\cline{1-2}\cline{4-5}
\Gamma_{24.6.1^6}     &E_8(r^3) &&
\Gamma_{18.6.3^3.1^3} &E_6({9r^3}) \\
\Gamma_{8^32^33^2}    &E_8\left(\frac{r^3-1}{r^3+1}\right) &&
\Gamma_{9.6^3.3.2^3}  &E_6\left(\frac{1-3r^3}{9-3r^3}\right) \\
\Gamma_{8^36.3.1^3}   &E_8(4r^3-1)&&
\Gamma_{9.6^4.1^3}    &E_6\left(1-\frac{8}{9r^3}\right) \\
\Gamma_{24.3.2^3.1^3} &E_8\left(\frac{2}{r^3-2}\right) &&
\Gamma_{18.3^4.2^3}   &E_6\left(\frac{1}{9(8r^3+1)}\right)\\
\cline{1-2}\cline{4-5}
\end{array}
$
\caption{Families of elliptic curves $E_n(m^{-1}(r^3))$
corresponding to certain noncongruence
subgroups.}
\label{tab:equationsofnonconfamilies}
\end{table}

We computed the traces of Frobenius by summing local terms 
using: 

\begin{thm}
\[
\Tr (\Frob _q | H^1 (X(\Gamma ), \msr{F}))
= - \sum_{x \in X(\fq)} \Tr (\Frob _q |\msr{F}_x ).  
\]
\end{thm}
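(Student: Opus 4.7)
The plan is to derive the identity as a direct application of the Grothendieck--Lefschetz trace formula to the smooth proper curve $X(\Gamma)$ over $\mathbf{F}_q$ (for a good prime $p = \mathrm{char}\,\mathbf{F}_q$, so that the chosen $\mathbf{Q}$-model has good reduction) together with the constructible $\ell$-adic sheaf $\mathscr{F}=j_* R^1 f_* \mathbf{Q}_\ell$ on $X(\Gamma)$. The general trace formula gives
\[
\sum_{i=0}^{2}(-1)^i \Tr\bigl(\Frob_q \mid H^i(X(\Gamma)\otimes\overline{\mathbf{F}_q},\mathscr{F})\bigr) \;=\; \sum_{x\in X(\mathbf{F}_q)} \Tr(\Frob_q\mid \mathscr{F}_x).
\]
Once we verify that the $H^0$ and $H^2$ contributions vanish, only the middle term survives, contributing with sign $(-1)^1=-1$, which is exactly the claimed identity after moving the sign to the other side.

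First, I would handle $H^0$. Because $j$ is the inclusion of the open modular curve $Y(\Gamma)$ into its smooth compactification $X(\Gamma)$, a global section of $j_* R^1 f_* \mathbf{Q}_\ell$ on $X(\Gamma)$ is the same as a global section of $R^1 f_* \mathbf{Q}_\ell$ on $Y(\Gamma)$, i.e. an element invariant under the geometric monodromy representation of $\pi_1(Y(\Gamma))$ on $H^1$ of a generic fiber. For the families $E_8, E_6$ (and their pullbacks to $X_\Gamma$ via the maps of Table~\ref{tab:coveringmaps}), the fibration is non-isotrivial and the $j$-invariant is non-constant, so the associated monodromy representation into $\SL_2(\mathbf{Q}_\ell)$ is irreducible and has no invariants; hence $H^0(X(\Gamma),\mathscr{F})=0$.

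Next, for $H^2$, I would invoke Poincaré--Verdier duality on the smooth proper curve $X(\Gamma)$. The sheaf $R^1 f_* \mathbf{Q}_\ell$ on $Y(\Gamma)$ is self-dual up to a Tate twist via the Weil pairing, and because $\mathscr{F}=j_*$ of this local system is the middle (intermediate) extension to $X(\Gamma)$, Poincaré duality gives $H^2(X(\Gamma),\mathscr{F})\cong H^0(X(\Gamma),\mathscr{F})^\vee(-2)$ (up to an appropriate twist). The vanishing of $H^0$ therefore forces $H^2=0$ as well. This is the standard cohomological input that underlies Scholl's construction.

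The main obstacle is to make the last two steps rigorous at the integral level, namely to ensure that the monodromy argument and Poincaré duality apply to the chosen $\mathbf{Q}$-model at the prime $q$; this is where the hypothesis of a good $\mathbf{Q}$-rational model (with a $\mathbf{Q}$-rational cusp, as emphasized in Section~\ref{SS:examples}) enters, guaranteeing that $\mathscr{F}$ has good reduction and that the formation of $j_*$ commutes with reduction mod $p$. Once this is in place, the Grothendieck trace formula yields the identity, and the local contributions $\Tr(\Frob_q\mid\mathscr{F}_x)$ are computed by counting points on the geometric fibers $E_\Gamma(\mathbf{F}_q)$ as described in Section~\ref{subsection:ellipticsurfacesnoncong}, with the appropriate contributions at the cusps coming from the stalks of $j_* R^1 f_*\mathbf{Q}_\ell$ (i.e.\ the invariants of local monodromy).
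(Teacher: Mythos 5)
Your proposal follows essentially the same route as the paper: the paper's proof is exactly the Grothendieck--Lefschetz trace formula combined with the vanishing of $H^i(X(\Gamma),\msr{F})$ for $i\neq 1$, which it asserts without justification and which you supply (no monodromy invariants for the non-isotrivial family for $H^0$, and duality for the middle extension for $H^2$). So the argument is correct and matches the paper's approach, just with the vanishing statements spelled out.
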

\begin{proof}
This follows from 
Grothendieck-Lefschetz trace formula because the other terms
 $H^i (X(\Gamma ), \msr{F})), \ i \ne 1$ are zero. 
\end{proof}

The following is also well known:

\begin{thm}
\label{thm:Grothendieck-Lefschetz}
$\Tr (\Frob _q |\msr{F}_x )$ may be computed according to
the following:
\begin{enumerate}
\item If the fiber $E_x$ is smooth, then 
\[
 \Tr ({\Frob} _q |\msr{F}_x ) =  
 \Tr ({\Frob} _q |H^1 (E_x, \Q _l) ) =
q +1 - \#E_x (\fq). 
\]
\item
If  the fiber $E_x$ is singular, then Tate's algorithm tells us that
\[
 \Tr ({\Frob} _q |\msr{F}_x ) = 
\begin{cases}
1\text{  if the fiber is split multiplicative.}\\
-1\text{  if the fiber is nonsplit multiplicative.}\\
0\text{  if the fiber is additive.}\\
\end{cases}
\]
\item
If $E$ is a singular curve over a field with characteristic not $2$ or $3$,
given by an equation
$$E: y^2 = x^3 + ax + b,$$ then the reduction type of $E$ is determined as follows:
\[
\left.
\begin{array}{l}
\text{additive }\\
\text{split multiplicative }\\
\text{nonsplit multiplicative}
\end{array}
\right\}
\text{ if } -2ab \text{ is}
\left\{
\begin{array}{l}
\text { $0$ in $k$}\\
\text { a nonzero square in $k$}\\
\text { not a square in $k$}
\end{array}
\right.
\]
\end{enumerate}
\end{thm}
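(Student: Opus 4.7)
The plan is to handle the three parts of Theorem~\ref{thm:Grothendieck-Lefschetz} independently, the first by the Grothendieck--Lefschetz trace formula on the smooth elliptic fiber, the second by a local analysis of the stalks of $\msr{F} = j_* R^1 f_* \Q_l$ at the bad points, and the third by an explicit tangent-cone computation on the singular Weierstrass model.

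For part (1), since $f$ is smooth over the open locus where the fiber is smooth, proper base change identifies $\msr{F}_{\bar x}$ with $H^1(E_x \times_{\fq} \overline{\fq}, \Q_l)$, and $j_*$ does not alter this stalk since $x$ already lies in the smooth locus. Applying the Lefschetz trace formula to $E_x/\fq$, with $H^0(E_x,\Q_l)=\Q_l$ contributing trace $1$ and $H^2(E_x,\Q_l)=\Q_l(-1)$ contributing trace $q$, one gets $\#E_x(\fq) = 1 + q - \Tr(\Frob_q \mid H^1)$, which rearranges to the claim.

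For part (2), by the definition of $j_*$ the stalk $\msr{F}_{\bar x}$ at a singular fiber is the space of $I_x$-invariants in the nearby generic stalk, where $I_x$ is the inertia at $x$. In the multiplicative case the Tate uniformization makes $I_x$ act through a rank-one unipotent, so the invariants are one-dimensional; Frobenius acts there by the character that measures its action on the two tangent lines at the node, which is trivial for split and the nontrivial quadratic character for nonsplit, yielding trace $+1$ and $-1$ respectively. In the additive case, the monodromy filtration on $H^1$ is trivial and $I_x$ has no nonzero fixed vectors, so $\msr{F}_{\bar x}=0$ and the trace is $0$. These identifications, classical in Tate--N\'eron theory, can be checked directly by comparing with $q+1-\#E_x(\fq)$: the smooth locus of $E_x$ is $\gm$, the nonsplit one-dimensional torus, or $\mathbf{G}_a$ in the three cases, producing exactly $1,\,-1,\,0$.

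For part (3), the curve is singular iff $4a^3 + 27b^2 = 0$. If $a=0$ this forces $b=0$ and the singularity is the cusp at the origin, consistent with $-2ab=0$ and additive reduction. Otherwise $x^3+ax+b$ has a unique double root $x_0 = -3b/(2a)$, and writing $x = x_0 + u$ gives $x^3+ax+b = u^2(u+3x_0)$, so the tangent cone at the singular point $(x_0,0)$ is $y^2 = 3x_0\,u^2$. The two tangents are defined over $k$ iff $3x_0 \in (k^\times)^2$, and a short calculation gives $-2ab = 12x_0^5 = 3x_0\cdot(2x_0^2)^2$, so $-2ab$ is a nonzero square (resp.\ a nonsquare, resp.\ zero) precisely when $3x_0$ is, producing the stated classification. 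The only genuinely nontrivial content lies in part (2), where the identification of $\Tr(\Frob_q \mid \msr{F}_x)$ with the $\pm 1, 0$ of reduction theory relies on standard structure results for $\ell$-adic cohomology of elliptic curves over local fields; parts (1) and (3) are routine.
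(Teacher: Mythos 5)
The paper does not prove this theorem at all: it is stated with the remark ``the following is also well known'' and used as a black box, so your proposal supplies an argument the authors omitted rather than paralleling one. Your three arguments are the standard ones and are essentially correct: (1) smooth/proper base change plus the Lefschetz trace formula on the fiber; (2) the identification of the stalk of $j_*R^1f_*\Q_l$ at a bad point with the inertia invariants of the nearby generic stalk, on which Frobenius acts by $+1$, by the unramified quadratic character, or which vanish, in the split multiplicative, nonsplit multiplicative, and additive cases; (3) the tangent-cone computation, where your identity $-2ab=3x_0\,(2x_0^2)^2$ with $x_0=-3b/(2a)$, $a=-3x_0^2$, $b=2x_0^3$ checks out and the degenerate case $-2ab=0$ forces $a=b=0$, i.e.\ a cusp.

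One small imprecision in part (2): in the additive case you assert that ``the monodromy filtration on $H^1$ is trivial and $I_x$ has no nonzero fixed vectors.'' The first clause is only true for additive reduction that is potentially good; if the curve is additive but potentially multiplicative (a ramified quadratic twist of a Tate curve), inertia acts through an infinite group and the unipotent part does have a one-dimensional invariant line. The conclusion still holds because the ramified quadratic character acts by $-1$ on that line, so the full inertia invariants are zero; it would be worth saying this explicitly, splitting the additive case into potentially good and potentially multiplicative subcases. A second caveat, about the use of the theorem rather than its statement: criterion (3) classifies the singular plane cubic, and feeding it into (2) to compute $\Tr(\Frob_q\mid\msr F_x)$ is legitimate only when the Weierstrass model is minimal at $x$, since a non-minimal equation can exhibit a cusp over a point where the actual fiber is multiplicative or even good; this does not affect the correctness of your proof of the statement as written.
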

In order to apply part (3) of the above result, we need to transform $E_8(t)$ and
$E_6(t)$ in to the simplified Weierstrass form $y^2=x^3 + ax +b$.  
We obtain the following curves,
isomorphic to the originals, over any field of characteristic not $2$ or $3$.
\begin{eqnarray}
\widetilde E_8 :&& y^2=x^3 - 27(t^4 - 16t^2 + 16)x + 54(t^2-2)(t^4 + 32t^2 - 32)\\
\widetilde E_6 :&& y^3=x^3 -2^43^3(3t - 1)(3t^3 - 3t^2 + 9t - 1)x\\
&& \hspace{1.5cm}
-2^73^3(3t^2 + 6t - 1)(9t^4 - 36t^3 + 30t^2 - 12t + 1)
\nonumber
\end{eqnarray}
Thus one may compute values of the trace by
using the above result, for example with {\sc Magma}. 
The results for a range of values of $p$ and various covers of $E_8$ and $E_6$
are given in Table~\ref{traces}.

\begin{table}[h]
$$
\hspace{-1in}
\renewcommand\arraystretch{1.5}
\begin{array}{|c|c|c|c|c|c|c|c|c|c|c|c|c|c|c|c|c|}
\hline
\text{Group}&\text{Equation}&p & 5& 7& 11& 13& 17& 19& 23 & 73 \\ \hline
\Gamma_{24.6.1^6}     &E_8(r^3)&\Tr _p & 0 &4 & 0 & -44 & 0 & 52 & 0 & -92 \\
&        &\Tr_{p^2} &100 & -188 &    484 &    292 &  1156 &    -92 &   2116 & -17084\\   
\hline
\Gamma_{8^32^33^2}   
 &E_8\left(\frac{r^3-1}{r^3+1}\right)&\Tr _p & 0 &-4 & 0 & -44 & 0 & -52 & 0 & -92 \\
&        &\Tr_{p^2} &100 & -188 &    484 &    292 &  1156 &    -92 &   2116 & -17084\\   
\hline
\Gamma_{8^36.3.1^3}   &E_8(r^3-1)&\Tr _p &0& -3& 0& 13& 0 & 33& 0 & -71 \\
&&\Tr_{p^2}&
-44 &-95 &52 &169 &1012 & -359 &-1772 & 5617\\
\cline{2-11}
 &E_8(2r^3-1)&\Tr _p &0& 3& 0& 13& 0 & -33& 0 & -71 \\
&&\Tr_{p^2}&
-44 &-95 &52 &169 &1012 & -359 &-1772 & 5617\\
\cline{2-11}
  &E_8(4r^3-1)&\Tr _p &0& 0& 0& -26& 0 & 0& 0 & 142  \\
&& \Tr_{p^2}& -44 & 190 &52 & -338 &1012 &718 &-1772 & -11234 \\
\hline
\Gamma_{24.3.2^3.1^3} &E_8\left(\frac{2}{r^3-2}\right)&\Tr _p &0& 0& 0& -26& 0 & 0& 0 & 
142  \\
&& \Tr_{p^2}& -44 & 190 &52 & -338 &1012 &718 &-1772 & -11234 \\
\hline
\hline
\Gamma_{18.6.3^3.1^3} &E_6(3r^3)&\Tr _p&0 &-11 & 0& -5& 0 & 19 & 0& 76\\
&&\Tr_{p^2} & 28& -23& 196& 313& 508& 361& 316& -18428 \\
\hline
\Gamma_{18.6.3^3.1^3} &E_6(9r^3)&\Tr _p&0 &22 & 0& 10& 0 & -38 & 0& 76\\
&&\Tr_{p^2} & 28& 46& 196& -626& 508& -722& 316& -18428 \\
\hline
\Gamma_{9.6^3.3.2^3}  &E_6\left(\frac{1-3r^3}{9-3r^3}\right)&\Tr _p&0 &22 &0 &10&0 &-38&0 &76 \\
&&\Tr_{p^2} & 28& 46& 196& -626& 508& -722& 316& -18428 \\
\hline
\Gamma_{9.6^4.1^3}    &E_6\left(1-\frac{24}{r^3}\right)
&\Tr _p& 0& 7& 0& -5& 0& -17& 0& -248\\
& & \Tr_{p^2} &
64& 49& 448& 313& -140& 433& 1972& 9436
\\
\cline{2-11}
&E_6\left(1-\frac{8}{3r^3}\right)
&\Tr _p&0 &-14& 0 & 10& 0 & 34& 0 & -248\\
&&\Tr_{p^2} & 64& -98& 448& -626& -140& -866& 1972& 9436\\
\hline
\Gamma_{18.3^4.2^3}   &E_6\left(\frac{1}{24r^3+9}\right)&
\Tr _p&
0 &-14& 0 & 10& 0 & 34& 0 & -248\\
&&\Tr_{p^2} & 64& -98& 448& -626& -140& -866& 1972& 9436\\
\hline
\end{array}
$$
\caption{Table of $\Tr \, \rho^* ({\Frob}_p)$.}
\label{traces}
\end{table}

\clearpage

\section{Involutions and Isogenies}
\label{S:II}

\subsection{Involutions}
\label{SS:In}
The
four dimensional representations on 
$H^1(X(\Gamma),\msr F_\Gamma)$ in fact split into two $2$-dimensional
Galois representations.  We can achieve this splitting by using an
involution on $\Gamma\setminus \HH$ which extends to either an
automorphism or isogeny on the elliptic surface.

For each family given in Table~\ref{tab:equationsofnonconfamilies}
 by an equation $E_n(r)$, 
corresponding
to a covering $r^3=m(t)$,
we have involutions $\iotaA$ and $\iota$ of $t$ and $r$,
given in Table~\ref{table:involutionsonbase},
such that the following diagram commutes.
$$
\xymatrix{
{\mathbb P^1}\ar[r]^{r\mapsto \iota(r)}
\ar[d]_{r\mapsto r^3=m(t)}
&{\mathbb P^1}\ar[d]^{r\mapsto r^3=m(t)}\\
{\mathbb P^1}\ar[r]_{t\mapsto \iotaA(t)}
&{\mathbb P^1}
}
$$
Furthermore, if $c_1, c_2$ are the 
ramified cusps of the map $r\mapsto r^3=m(t)$, 
and $c_3, c_4$ are the unramified cusps, then
$\iotaA$ fixes the sets $\{c_1,c_2\}$ and $\{c_3,c_4\}$.
This means that the involution
$\iotaA$
 lifts to an involution
$\iota$ of $r$, as indicated in Table~\ref{table:involutionsonbase}.
To check these are the correct maps, one just needs to verify that
$(\iota(\sqrt[3]{m(t)}))^3=m(\iotaA(t))$, which is simple algebra.

\begin{table}
$$
\renewcommand\arraystretch{1.5}
\begin{array}{|l|ll|l|cc|}
\hline
\multicolumn{6}{|c|}{\text{
Involutions  $\iotaA$ 
of $X(\Gamma_0(8)\cap\Gamma_1(4))$, and $\iota$ of
$X(\Gamma)$, for $\Gamma\subset\Gamma_0(8)\cap\Gamma_1(4)$
}}
\\
\hline
\text{subgroup}&\multicolumn{2}{|l|}{\text{values of $\tau$ and $t$ where}}
&r^3= & \multicolumn{2}{c|}{\text{involutions of $t$ and $r$}}
\\
\Gamma&
\multicolumn{2}{c|}{\text{cover ramifies}}
&m(t)
&\iotaA: t\mapsto & \iota: r\mapsto \\
\hline
& \tau\hspace{2cm} & t(\tau)\hspace{0cm} &&&\\
\hline
\Gamma_{24.6.1^6}    & 1/2, 0   &\infty, 0 & t & -t & -r\\
\Gamma_{8^2.2^3.3^2}& \infty, 1/4  &1, -1&\frac{t+1}{1-t} & 1/t & -r\\
\Gamma_{8^36.3.1^3}  & 1/2, 1/4 &\infty, -1& \frac{t+1}{4}& \frac{1-t}{1+t}& \frac{1}{2r}\\
\Gamma_{24.3.2^3.1^3}  & 0, 1/4   &0, -1      &\frac{2(1+t)}{t}& \frac{t+1}{t-1}& \frac{2}{r}\\
\hline
\hline
\multicolumn{6}{|c|}
{\text
{
Involutions  $\iotaA$ 
of $X(\Gamma_1(6))$, and $\iota$ of
$X(\Gamma)$ for } \Gamma\subset\Gamma_1(6)
} 
\\
\hline
\text{subgroup}&\multicolumn{2}{l|}{\text{values of $\tau$ and $t$ where}}
&r^3= & \multicolumn{2}{c|}{\text{involutions of $t$ and $r$}}
\\
\Gamma&
\multicolumn{2}{c|}{\text{cover ramifies}}
&m(t)
&\iotaA: t\mapsto & \iota: r\mapsto \\
\hline
& \tau & t(\tau) &&&\\
\hline
\Gamma_{18.6.3^3.1^3}   &1/3, 0 &\infty,0 &t/9 & \frac{1}{9t} & \frac{1}{9r}\\
\Gamma_{9.6^3.3.2^3}   &\infty, 1/2 &\frac{1}{9}, 1&\frac{1-9t}{3(1-t)}&\frac{1}{9t} & \frac{1}{r}\\
\Gamma_{9.6^4.1^3}     &1/2, 1/3 &1, \infty&\frac{8}{3(1-t)} & \frac{1-9t}{9-9t}& \frac{2}{r}\\
\Gamma_{18.3^4.2^3}  &\infty, 0 &\frac{1}{9}, 0&\frac{1-9t}{24t} & \frac{1-9t}{9-9t}& \frac{1}{2r}
\\
\hline
\end{array}
$$
\caption{Involutions of modular curves $\Gamma\setminus \HH$.
For $\Gamma_0(8)\cap \Gamma_1(4)$, 
$t(\tau)=\frac{\eta(z)^8\eta(4z)^4}{\eta(2z)^{12}}$,
and for $\Gamma_1(6)$, $t(\tau)=\frac{1}{9}
\frac{\eta(6\tau)^{4}\eta(\tau)^{8}}  
{\eta(3\tau)^{8}\eta(2\tau)^{4}}$, as in
Tables~\ref{tab:weierstrassforbeauville},
\ref{tab:cuspvaluesG8}, and
\ref{tab:cuspvaluesforlevel6}.
}
\label{table:involutionsonbase}
\end{table}

\subsection{Isogenies}
\label{SS:Iso}

The involutions  $\iotaA$ of
 modular curves given in Table~\ref{table:involutionsonbase}
lift to maps
\begin{eqnarray}
\nonumber
\tilde\iotaA: E_n &\rightarrow& E_n\\
\tilde\iotaA: (t,x,y) \in E_n(t) &\mapsto& (\iotaA(t),
\
\iotaA_x(t,x,y),
\
\iotaA_y(t,x,y)),
\end{eqnarray}
where $n=8$ or $6$, which restrict to isogenies 
between the fibres
of the corresponding family of elliptic curves
(given by (\ref{eqn:E8}) and (\ref{eqn:E6})).
From the isogenies of the families
$E_6(t)$, $E_8(t)$, 
one can obtain the isogenies on the families
$E_6(m^{-1}(r^3))$, $E_8(m^{-1}(r^3))$, lifting  $\iota$ to $\tilde\iota$.
These isogenies will give rise to involutions on the level of cohomology.

To show that two curves $E(t)$ and $E(\iotaA(t))$
are isogenous by an isogeny of degree $d$, it suffices to show that
$\Phi_d(j(E(t)),j(E(\iotaA(t))))=0$,
where $\Phi_d$ is the $d$th modular polynomial.
The isogeny can be explicitly determined by Velu's methodfrom a subgroup of
order $d$ on $E(t)$.  
Although the algorithms involved are well known and not difficult theoretically, 
in practice they should be
carried out with the help of a computer program, such as 
{\sc Magma} \cite{magma}, because of the large number of
of terms in the polynomials involved.
For example, $\Phi_8$ is a polynomial in two variables of degree
$20$ with $141$ terms; 
$\Phi_n$ can be found in a {\sc Magma} database using the
command {\tt ClassicalModularPolynomial(n)}  for $1\le n\le 17$.

Although it's not important to know the isogeny exactly, we do
need to know the field over which the map is defined.  This information
was computed with the assistance of {\sc Magma}, and is given in 
Table~\ref{table:isogeniesfieldofdefinition}.  The polynomials given in this table
are such that their roots are the $x$-coordinates of points in the kernel of the
isogeny.  

\begin{table}
$$
\renewcommand\arraystretch{1.5}
\begin{array}{llllll}
\hline
\text{subgroup}&
 \iotaA(t)  && d  & \text{\small polynomial 
defining}
&\tilde\iota\text{'s field of}\\
&&&&\text{kernel of isogeny}&\text{ definition}\\
\hline
\text{Level $8$ cases}
\\
\hline
\Gamma_{24.6.1^6}    &-t & &1&- & \Q\\
\Gamma_{8^2.2^3.3^2}& 1/t & &4&(x+t^2)x&\Q\\
 \Gamma_{8^36.3.1^3}  &\frac{1-t}{1+t}& &8&(x^2 - 4tx - 4t^3)(x+t^2)x & \Q[\sqrt{-1}]\\
\Gamma_{24.3.2^3.1^3} &\frac{t+1}{t-1}&& 8&(x^2 + 4tx + 4t^3)(x+t^2)x & \Q[\sqrt{-1}]
 \\
\hline
\text{
Level $6$ cases
}
\\
\hline
\Gamma_{18.6.3^3.1^3}, \Gamma_{9.6^4.1^3}   &\frac{1}{9t} && 3 &  x - t^2 + t
&\Q[\sqrt{-3}]\\
\Gamma_{9.6^3.1^3}, \Gamma_{18.3^4.2^3}& \frac{1-9t}{9-9t}&&6&(x - t^2 + t)x(x+t)
 &\Q[\sqrt{-3}]\\
 \hline
\end{array}
$$
\caption{Data concerning involutions $\iotaA$ and $\iota$
of Table~\ref{table:involutionsonbase}, 
lifted to maps $\tilde\iota$ of families of curves, defining isogenies of
degree $d$ on fibres.  In particular, 
$\Phi_d(j(E_n(\iotaA(t))),j(E_n(t)))=0$ where $n$ is the level, and
$\Phi_d$ is the $d$th modular polynomial.}
\label{table:isogeniesfieldofdefinition}
\end{table}

\subsection{Isogenous relationships between families}
\label{subsec:isog_between_families}

In the previous section we showed how
involutions give rise to isogenies on the fibres,  which will
resulting in involutions on the cohomology of each family.
There are also isogenous maps between families, which explain our groupings into
pairs of cases,
which was originally based on the relationships between traces
seen in Table~\ref{traces}.  
Combining the relations between curves we already have, we find that 
\begin{eqnarray*}
\Phi_8\left(
j\left(
E_6\left(
\frac{t-1}{t+1}
\right)
\right),
j\left(
E_8\left(
\phi_1(t)
\right)
\right)
\right)&=&0
\\
\Phi_8\left(j(E_8(4t-1)), j\left(E_8\left(\frac{2}{\phi_2(t)-2}\right)\right)\right)&=&0\\
\\
\Phi_6\left(
j\left(
E_6\left(
\frac{1-3t}{9-3t}
\right)
\right),
j\left(
E_6\left(
9\phi_3(t)
\right)
\right)
\right)&=&0
\\
\Phi_3\left(
j\left(E_6\left(1-\frac{8}{3t}\right)\right), 
j\left(
E_6\left(
  \frac{1}{9-24\phi_4(t)}\right)\right)\right)&=&0,
\end{eqnarray*}
where $\phi_1(t)=\phi(2)=1/t, \phi_3(t)=t/3, \phi_4(t)=-1/t$.
This may also be checked directly with {\sc Magma}. 
Thus the maps $\phi_i$ between the bases lift to isogenies on the fibres between
families.  Replacing $t$ by $r^3$ in these equations does not change the relationships,
so this also holds for the covers, and these maps induce isomorphisms on the level
of cohomology.  Refer to Table~\ref{traces} for which cover corresponds to which group.

\clearpage

\section{Experimental data for the ASwD congruences}
\label{S:aswddata}
The strategy for finding an ASwD basis is the following: For our noncongruence
subgroup $\Gamma$, we have found a basis $h_1$, $h_2$ for 
$S_3 (\Gamma)$. We have also found a Hecke eigenform $f\in S_3 (\Gamma _0, \chi)$
for some congruence subgroup $\Gamma _0$. Let $a_n$ and $b_n$ respectively
be the expansion coefficients of $h_1$ and $h_2$. Let $A_n$ be the expansion 
coefficients of $f$.
We consider two possible situations.

\subsection{Case 1}
\label{ss:case1of_S:aswddata}

In the simplest case, $h_1, h_2$ is already an ASwD basis.
This case occurs in section \ref{SS:48}.  So
for good primes $p$ and integers $n$ with $p\not| n$
\begin{eqnarray}
a_{pn} \equiv A_pa_{n}\mr{\ mod\ }p^2 
\;\;\;\text{ and }\;\;\;
b_{pn} \equiv A_p b_{n}\mr{\ mod\ }p^2,
\end{eqnarray}
which implies, for $p$ fixed and $n$ varying with
$a_n\not=0$ and $b_n\not=0$,
\begin{eqnarray}
a_{pn}/a_{n} \equiv \text{constant}\mr{\ mod\ }p^2 
\;\;\;\text{ and }\;\;\;
b_{pn}/b_n \equiv \text{constant}\mr{\ mod\ }p^2.
\label{eqn:ASwD_test1}
\end{eqnarray}
So, our test for whether $h_1, h_2$ is an ASwD basis is 
to check whether 
$a_{pn}/a_n$ and
$b_{pn}/b_n$ take constant values for fixed $p$ and varying $n$,
with $np$ less than some fixed bound.
If this holds, then we also consider this to be evidence that
$h_1, h_2$ is an ASwD basis.  We can make this conclusion regardless of
whether $f$ is known.

In the case $n=1$, since $a_1 = b_1 = 1$, (\ref{eqn:ASwD_test1}) 
implies that
\begin{eqnarray}
a_{p} \equiv A_p\mr{\ mod\ }p^2 
\;\;\;\text{ and }\;\;\;
b_{p}\equiv A_p\mr{\ mod\ }p^2.
\label{eqn:ASwD_test2}
\end{eqnarray}

In order to determine the associated congruence modular form, we 
test whether (\ref{eqn:ASwD_test2}) holds for small primes
for the candidate form $f$.
This is what happens in subsection \ref{SSS:24}.

In some cases, to get congruences,
$f$ needs to be replaced by $f\otimes\chi$ for some character $\chi$.
Then $A_p$ will be replaced by $A_p\chi(p)$ in (\ref{eqn:ASwD_test2}),
so this phenomena can be recognized by checking whether
$A_p/a_p$ and $A_p/b_p$ are roots of unity.  
This happens in subsection \ref{SSS:82}.
However, we have not worked out what the character $\chi$ is.

\subsection{Case 2}
\label{ss:case2of_S:aswddata}
In most of our examples examples, it turns out that 
the ASwD basis depends on the congruence class of the prime $p$
modulo some small integer.  It turns out that for some primes,
(\ref{eqn:ASwD_test1}) holds for the values tested, 
in which case $h_1, h_2$ is assumed to be
the ASwD basis, but for other primes, this does not hold.

If (\ref{eqn:ASwD_test1}) does not hold for some prime $p$, then we will
assume that for this prime, an ASwD basis consists of
linear combinations of the form
$h_1 + \alpha h_2$, where $\alpha$ is an algebraic number of small degree,
such that for integers $n$ with $p\not| n$. 
the expansion coefficients satisfy 
\begin{eqnarray}
a_{pn} + \alpha b_{pn} \equiv A_p (a_{n} + \alpha b_{n} )\mr{\ mod\ }p^2.
\label{eqn:case2_eq1}
 \end{eqnarray}
A priori, $\alpha$ depends on $p$, though we will see that 
in the examples we are considering, evidence
suggests that it only depends on the congruence class of $p$ modulo 
a small integer.

For (\ref{eqn:case2_eq1}) to hold, it is sufficient, but not necessary,
that
\begin{eqnarray}
a_{pn} \equiv  A_p\alpha b_{n}\mod p^2,
\text{ and }\alpha b_{pn} \equiv A_p a_{n} \mod p^2,
\label{eqn:case2_eq2}
\end{eqnarray}
which, assuming all the terms are non-zero, implies that
$a_{pn}/b_n = A_p\alpha_p$ and $b_{pn}/a_n = A_p/\alpha_p$,
So if (\ref{eqn:ASwD_test1}) does not hold as $n$ varies,
we test whether  
\begin{eqnarray}
\frac {a_{np}}{ b_n}\equiv\text{constant}\mod p^2
\;\;\;\text{ and }\;\;\;
\frac {b_{np}}{ a_n}\equiv\text{constant}\mod p^2.
\label{eqn:case2_eqn4}
\end{eqnarray}
If this holds,
the values of $\alpha$ and $A_p$
mod $p^2$, up to sign, are determined by
\begin{eqnarray}
\alpha^2 \equiv 
\frac {a_{np}}{ b_n}\Big/\frac{ b_{np}}{ a_n}\mod p^2,
\;\;\;\text{ and }\;\;\;
A_p^2\equiv
\frac {a_{np}}{ b_n}\frac{ b_{np}}{ a_n}\mod p^2.
\label{eqn:case2_eqn3}
\end{eqnarray}
For $p$ for which (\ref{eqn:case2_eqn4}) holds,
there are two solutions to (\ref{eqn:case2_eqn3})
for $\alpha$, and the ASwD basis has the form
$h_1 + \alpha h_2, h_1 - \alpha h_2$.
We expect that $\alpha$ only depends on $p$ modulo some small integer.
Since $\alpha$ is expected to be an algebraic integer, but not an integer,
it may be difficult to guess the value of $\alpha$, from $\alpha\mod p^2$.
So we also look at powers of $\alpha\mod p^2$, and if for some small power
these are constant as $p$ varies, then we deduce a value of $\alpha$.
Once $\alpha$ is determined, $A_p\mod p^2$ is determined, if this agrees
with the coefficients of our congruence modular form, then we take this
as evidence that  
$h_1 + \alpha h_2, h_1 - \alpha h_2$ is an ASwD basis with $f$ the
associated new form.  As for case 1, we will also test whether
the $A_p$ must be multiplied some root of unity, presumably the value
$\chi(p)$ for some character $\chi$, though again, we have not determined
the character in question.

\subsection{Examples associated with 
newform in $S_3 (\Gamma _0 (48), \chi)$}
\label{SS:48}
For $\Gamma_{24.6.1^6}$ and $\Gamma_{8^3.2^3.3^3}$,
evidence suggests that the associated
congruence form is as follows, with the first few $A_p$ as in 
Table~\ref{table_coefs1}.
\begin{eqnarray}
f(z) &=&\displaystyle{
\frac{\eta(4z)^9\eta(12z)^9}
{\eta(2z)^3\eta(6z)^3\eta(8z)^3\eta(24z)^3}}
\label{eqn:examples:f_in_case1}
\\
&=&
q + 3q^3 - 2q^7 + 9q^9 - 22q^{13} - 26q^{19} - 6q^{21} + 25q^{25} +\ldots
\nonumber
\end{eqnarray}

\begin{table}[h]
$
\begin{array}{|c|ccccccccccccccccc|}
\hline
p & 5 & 7 & 11 & 13 & 17 & 19 & 23 & 29 & 31 & 37 & 41 & 43 & 47 & 53 & 59 & 61 & 67 \\ 
\hline
a_p &
  0 & -2 & 0 & -22 & 0 & -26 & 0 & 0 & 46 & 26 & 0 & 22 & 0 & 0 &  0 &74&-122\\ 
\hline
\end{array}
$
\caption{First few coefficients $A_p$ for newform for 
$S_3 (\Gamma _0 (48), \chi)$.}
\label{table_coefs1}
\end{table}

\subsubsection{Atkin Swinnerton-Dyer congruences for
$\Gamma_{24.6.1^6}$}
\label{SSS:24}

We have shown previously that $S_3(\Gamma_{24.6.1^6})$ has a 
basis
\begin{eqnarray}
\label{h1_for_24.6.1^6}
h_1(z) &=&\displaystyle{\sqrt[3]{\frac{\eta(z)^4\eta(4z)^{20}}{\eta(2z)^6}}=
 q - \frac{4}{3}
q^2 + \frac{8}{9}q^3 - 
\frac{176}{81}q^4 - \frac{850}{243}q^5  
\cdots}
\\
\label{h2_for_24.6.1^6}
h_2(z) &=&\displaystyle{
\sqrt[3]{\frac{\eta(4z)^{16}\eta(2z)^6}{\eta(z)^4}}
=
 q + \frac{4}{3}q^2 + 
\frac{8}{9}q^3 + 
\frac{176}{81}q^4 - 
\frac{850}{243}q^5\cdots}
\end{eqnarray}
\comment{
created in pari with:

ha1=(eta(q)^4*eta(q^4)^20/eta(q^2)^6)^(1/3)*q;
ha2=(eta(q^4)^16*eta(q^2)^6/eta(q)^4)^(1/3)*q;

}
The first few prime coefficients of these forms are:
$$
\begin{array}{ccccccccccc}
p& 2 & 3 & 5 & 7 & 11 & 13 & 17 & 19\\
a_p 
 & -\frac{4}{3}
 & \frac{8}{9}
 & -\frac{850}{243}
 & -\frac{5968}{6561}
 & -\frac{35104520}{4782969}
 & \frac{952141694}{129140163}
 & -\frac{206256733102}{31381059609}
 & \frac{60201506159720}{2541865828329}\\
b_p \rule{0ex}{3ex}
  & \frac{4}{3}
 & \frac{8}{9}
 & -\frac{850}{243}
 & -\frac{5968}{6561}
 & -\frac{35104520}{4782969}
 & \frac{952141694}{129140163}
 & -\frac{206256733102}{31381059609}
 & \frac{60201506159720}{2541865828329}
\end{array}
$$

\begin{table}[h]
$
\begin{array}{c|ccccccccccccccccccccccc}
p  & 5 & 7 & 11 & 13 & 17 & 19 & 23 & 29 & 31 & 37 & 41 & 43 & 47\\
\hline
a_{np}/a_n\mod p^2
& 0 & 47 & 0 & 147 & 0 & 335 & 0 & 0 & 46 & 26 & 0 & 22 & 0\\
b_{np}/b_n\mod p^2
& 0 & 47 & 0 & 147 & 0 & 335 & 0 & 0 & 46 & 26 & 0 & 22 & 0
\end{array}
$
\caption{values of 
$\frac {a_{np}}{ a_n}$ and $\frac  {b_{np}}{ b_n}$
for primes $p\ge 5$ and integers $n$, with $pn\le 500$.
These agree mod $p^2$ with values in Table~\ref{table_coefs1}.}
\label{table_ratios1}
\end{table}
Since the ratios $a_{np}/a_n$ and $b_{np}/b_n$, given
in Table~\ref{table_ratios1}
appear to be constant, and the
numbers in Tables~\ref{table_coefs1} and \ref{table_ratios1} 
agree modulo $p^2$,
we conclude that the ASwD basis 
of $S_3(\Gamma_{24.6.1^6})$ 
is $h_1, h_2$, as given by
(\ref{h1_for_24.6.1^6}) and (\ref{h2_for_24.6.1^6}) for all primes,
with $f$ in (\ref{eqn:examples:f_in_case1}) 
being the associated congruence form.


\subsubsection{Atkin Swinnerton-Dyer congruences for
$\Gamma_{8^3.2^3.3^3}$}
\label{SSS:82}

Basis of $S_3(\Gamma_{8^3.2^3.3^3})$,
written in terms of $r=q^{1/3}$ and $s=q^{2/3}$.
\begin{eqnarray*}
h_1(z) &=&\displaystyle{
\sqrt[3]{
\frac{\eta(2\tau)^{20}\eta(8\tau)^{4}}{\eta(4\tau)^{6}}} =
\sum_{n\ge1}a_ns^n=
s - \frac{20}{3}s^4 + \frac{128}{9}s^7 - \frac{400}{81}s^{10} + \cdots}\\
h_2(z) &=&
\displaystyle{
\sqrt[3]{\frac{\eta(2\tau)^{16}\eta(4\tau)^{6}}{\eta(8\tau)^{4}}} =
\sum_{n\ge1}b_nr^n=
r - \frac{16}{3}r^{7} + \frac{38}{9}r^{13} + \frac{1696}{81}r^{19}
 + \cdots}
\end{eqnarray*}

\comment{
created in pari with:

\\ note, r=q^{1/3} and s=q^{2/3}

he1=(eta(s^3)^20*eta(s^6)^(-6)*eta(s^12)^4)^(1/3)*s;
he2=(eta(r^6)^16*eta(r^12)^6*eta(r^24)^(-4))^(1/3)*r;

}

First few prime coefficients:
$$
\begin{array}{ccccccccccc}
p& 2 & 3 & 5 & 7 & 11 & 13 & 17 & 19\\
a_p 
 & 0
 & 0
 & 0
 & \frac{128}{9}
 & 0
 & -\frac{3454}{243}
 & 0
 & -\frac{38656}{6561}
\\
b_p \rule{0ex}{3ex}
 & 0
 & 0
 & 0
 & -\frac{16}{3}
 & 0
 & \frac{38}{9}
 & 0
 & \frac{1696}{81}
\end{array}
$$

Our computations show that the ratios
$\frac {a_{np}}{ a_n}$ and $\frac  {b_{np}}{ b_n}$
remain constant for fixed $p$, for values of $pn$ up to 500.  
We can write these ratios in terms of $\omega$, a 
sixth root of $1$ mod $p^2$, as in Table~\ref{table_ratios2}.
In this table we also
tabulate $\omega$, and the order of $\omega$ as an element of
$(\Z/p^2\Z)^\times$.

\begin{table}[h]
$
\begin{array}{|l|ll|ll|ll|}
\hline
p &\multicolumn{2}{|c|}{
\frac {a_{np}}{ a_n}\mod p^2} 
& \multicolumn{2}{|c|}{\frac  {b_{np}}{ b_n}
\mod p^2}
&\omega&o(\omega)\\
\hline
 7  &36   &=-2\omega& 11   &=-2\omega^{-1}    &31&6\\ 
 11 & 0   && 0    &&&\\
 13 & 168 &=-22\omega& 23   &=-22\omega^{-1}    &146&3\\ 
 17 & 0   && 0    &&&\\
 19 & 11  &=-26\omega& 324  &=-26\omega^{-1}  &69&6\\ 
 23 & 0   && 0    &&&\\
 29 & 0   && 0    &&&\\
 31 & 915 &=46\omega     & 915  &=46\omega^{-1}  &-1&2\\ 
 37 & 47  &=26\omega& 1296 &=26\omega^{-1}    &581&3\\ 
 41 & 0   && 0    &&&\\
 43 & 1827&=22\omega& 1827 &=22\omega^{-1}       &{-1}&2\\ 
 47 & 0   && 0    &&&\\
\hline
\end{array}
$
\caption{values of 
$\frac {a_{np}}{ a_n}$ and $\frac  {b_{np}}{ b_n}$ for $\Gamma_{8^3.2^3.3^3}$,
for primes $p\ge 5$ and integers $n$, with $pn\le 500$,
in terms of a 6th root of unity, $\omega$, with order $o(\omega)$.
Compare with values in Table~\ref{table_coefs1}.}
\label{table_ratios2}
\end{table}
Since the values of 
$a_{np}/a_n$ and
$b_{np}/b_n$ are constant over the ranges computed,
we conjecture that
$h_1, h_2$ is an ASwD basis for all primes.
Comparing these values with the coefficients of $f$, we conjecture
that the associated congruence form is $f\otimes \chi$ where
$\chi$ is a certain Hecke character.



\subsection{Examples associated with 
newform in $S_3 (\Gamma _0 (432), \chi)$}
\label{SS:432}

For $\Gamma_{8^3.6.3.1^3}$
and
$\Gamma_{24.3.2^3.1^3}$
evidence suggests that the associated
congruence form is
\begin{eqnarray}
\label{eqn:newform2}
f(z)&=&
q + 6\sqrt{2}q^5 + \sqrt{-3}q^7 + 6\sqrt{-6}q^{11}+  13q^{13} - 6\sqrt{2}q^{17} +\\
&& 11\sqrt{-3}q^{19} - 18\sqrt{-6}q^{23}+
  47q^{25} - 24\sqrt{2}q^{29}
+ \cdots
\nonumber
\end{eqnarray}

The first few $A_p$ are given in 
Table~\ref{table_coefs2}, where they are 
divided by either $1$, $\sqrt{2}$,
$\sqrt{3}$, or $\sqrt{-6}$, for easy readability
\begin{table}[h]
$
\begin{array}{|c|ccccccccccccc|}
\hline
p            & 5 & 7 & 11& 13& 17& 19& 23 & 29 & 31 & 37 & 41& 43 & 47\\
\hline
A_p          &   &   &   &13 &   &   &    &   &   &35&    &   &\\
A_p/\sqrt{2} & 6 &   &   &   &-6 &   &    &-24&   &  & 0  &   &\\
A_p/\sqrt{-3}&   &1  &   &   &   &11 &    &   &24 &  &    &-24&  \\
A_p/\sqrt{-6}&   &   &6  &   &   &   & -18&   &   &  &    &    &6  \\
\hline
\end{array}
$
\caption{Coefficients of $f$ in (\ref{eqn:newform2}) 
and (\ref{eqn:f_432_as_eta_and_einstein}).}
\label{table_coefs2}.
\end{table}

The form $f$ can be given in terms of eta products and an
Eisenstein series as follows:
\begin{equation}
f(z) = f_1(12z) + 6\sqrt{2} f_5(12z) + \sqrt{-3}f_7(12z) + 6\sqrt{-6}f_{11}(12z),
\label{eqn:f_432_as_eta_and_einstein}
\end{equation}
where
\begin{eqnarray}
f_1(z)&=& \frac{\eta(2z)^3\eta(3z)}{\eta(6z)\eta(z)}E_6(z)
\label{eqn:defoff1}
\\
f_5(z)&=&
\frac{\eta(z)\eta(2z)^3\eta(3z)^3}{\eta(6z)}
\\
f_7(z)&=&\frac{\eta(6z)^3\eta(z)}{\eta(2z)\eta(3z)}E_6(z)
\\
f_{11}(z)&=&
\frac{\eta(3z)\eta(z)^3\eta(6z)^3}{\eta(2z)}
\\
\text {where }E_6(z)&=&
1 + 12\sum_{n\ge 1}(\sigma(3n) -3\sigma(n))q^n,
\end{eqnarray}
and $\sigma(n)=\sum_{d|n} d$.

\subsubsection{Atkin Swinnerton-Dyer congruences for
$\Gamma_{8^3.6.3.1^3}$}
\label{SSS:83}

We have seen that a basis of $S_3(\Gamma_{8^3.6.3.1^3})$ can be given by:
\begin{eqnarray*}
h_1(z)
&=&
\displaystyle{
\sqrt[3]{\frac{\eta(z)^{4}\eta(2z)^{10}\eta(8z)^{8}}{\eta(4z)^4}}}
=
\sum_{n\ge 1}a_nq^n
=
q - \frac 4 3q^2 - \frac{40}{9}q^3 + \frac{400}{81}q^4 
+ \frac{1454}{243}q^5 + \cdots
\\
h_2(z)
&=&
\displaystyle{
\sqrt[3]{\frac{\eta(z)^{8}\eta(4z)^{10}\eta(8z)^{4}}{\eta(2z)^4}}}
=
\sum_{n\ge 1}b_nq^n
=
 q - \frac 8 3q^2 + \frac 8 9q^3 + \frac {32} {81}q^4 
- \frac{82}{243}q^5 + \ldots
\end{eqnarray*}
\comment{
created in pari with:

hb1=(eta(q)^4*eta(q^2)^10*eta(q^8)^8/eta(q^4)^4)^(1/3)*q;
hb2=(eta(q)^8*eta(q^4)^10*eta(q^8)^4/eta(q^2)^4)^(1/3)*q;

}
The first few prime coefficients of $h_1$ and $h_2$ are as follows:
$$
\begin{array}{ccccccccccc}
p& 2 & 3 & 5 & 7 & 11 & 13 & 17 & 19\\
a_p 
 & -\frac{4}{3}
 & -\frac{40}{9}
 & \frac{1454}{243}
 & -\frac{13168}{6561}
 & \frac{38671144}{4782969}
 & -\frac{2230795138}{129140163}
 & -\frac{418720079278}{31381059609}
 & \frac{30660416258552}{2541865828329}\\
b_p \rule{0ex}{3ex}
  & -\frac{8}{3}
 & \frac{8}{9}
 & -\frac{82}{243}
 & -\frac{24400}{6561}
 & \frac{16345336}{4782969}
 & \frac{1236747902}{129140163}
 & \frac{842483994194}{31381059609}
 & -\frac{34758650729368}{2541865828329}
\end{array}
$$

For $p\equiv 1\mod 3$, our data suggests that
$a_{pn}/a_p$ and $b_{pn}/b_n$ remain constant as $n$ varies, with
values as in Table~\ref{table_ratios3}.  This means we are in case 1, described
in subsection~\ref{ss:case1of_S:aswddata}.
Experimentally, we noted that for these $p$ we always have
$\left(\frac{a_{pn}}{a_p}\big/\frac{b_{pn}}{b_n}\right)^6\equiv 1\mod p^2$ 
(excluding the case $p=13$, when $a_{pn}\equiv b_{pn}\equiv 0\mod 13$).
We also checked that
$\frac{a_{pn}}{a_p}\times \frac{b_{pn}}{b_n}
\equiv A_p^2\mod p^2$ where
the $A_p$ are as in Table~\ref{table_coefs2}.
The first observation indicates that these two forms correspond to
congruence forms which are
twists of each other by an order $6$ character, and the second observation
indicates that the congruence form is the $f$ given by (\ref{eqn:newform2}).
Using these two observations, we write the ratios $a_{np}/a_n$ and
$b_{np}/b_n$ in the factored forms in Table~\ref{table_ratios3}.  The values of
$\omega$, a
sixth root of $1$, and the values used for
$\sqrt{3}\mod p^2$ are also tabulated.

\begin{table}
$
\begin{array}{|l|ll|ll|ll|}
\hline
p &
\multicolumn{2}{|c|}
{\frac{a_{np}}{ a_n}\mod p^2}   & 
\multicolumn{2}{|c|}
{\frac{b_{np}}{b_n}\mod p^2}
&
\sqrt{-3}
&
\omega
\\
\hline
7&   17	&= \omega^{-4} \sqrt{-3}  &  29  &= \omega^{-2} \sqrt{-3}  &37&\sqrt[4]{-18}\\
13&  52	&= \omega^{-2} 13         &  130 &= \omega^2 13         &&
\sqrt{23}\\ 		
19&  48	&= \omega^{-2} 11\sqrt{-3}&  346 &= \omega^{-4} 11\sqrt{-3}&137
&\sqrt{69}\\ 	
31&  915&= \omega^{6} 24\sqrt{-3}&  46  &=  24\sqrt{-3}&82&\sqrt[6]{-1}\\ 
37&  165&= \omega^{-4} 35         &  1169&= \omega^4 35         &&\sqrt[4]{581}\\
43&  11	&=-\omega^{6} 24\sqrt{-3}&  1838&=- 24\sqrt{-3}&1002&
\sqrt[6]{-1}\\ 
\hline
\end{array}
$
\caption{Values of $a_{np}/a_n$ and $b_{np}/b_n$ for $p\equiv 1\mod 3$,
for $h_1$ and $h_2$ for $\Gamma_{8^3.6.3.1^3}$,
in terms of $A_p$ in Table~\ref{table_coefs2}.
}
\label{table_ratios3}
\end{table}

Based on these experiments, we conjecture that
the Atkin Swinnerton-Dyer basis 
of $S_3(\Gamma_{8^3.6.3.1^3})$
when $p\equiv 1\mod 3$ is $h_1, h_2$,
and the associated congruence forms are 
$f\otimes\chi$ and
$f\otimes\chi^{-1}$ for a certain Hecke character.

\begin{table}
$
\begin{array}{|l|ll|cr|}
\hline
p &
\frac {a_{np}}{ b_n}  &  \frac{ b_{np}}{ a_n}\mod p^2 
&
(\frac {a_{np}}{ b_n}/ \frac{ b_{np}}{ a_n})^6\equiv \alpha^3
&
\frac {a_{np}}{ b_n}\frac{ b_{np}}{ a_n}\equiv A_p^2
\\
\hline
5&   3   &  1    &4& -2\cdot6^2   \\
11& 84   &  32   &4& -6\cdot6^2   \\
17& 278  &  243  &4& -2\cdot6^2  \\
23& 335  &  130  &4& -6\cdot18^2   \\
29& 272  &  441  &4& -2\cdot24^2  \\
41& 0    &  0    & &\\
47& 302  &  760  &4& -6\cdot6^2  \\
\hline
\end{array}
$
\caption{Values of $a_{np}/b_n$ and $b_{np}/a_n$ for $p\equiv 2\mod 3$,
for $h_1$ and $h_2$ for $\Gamma_{8^3.6.3.1^3}$, with $\alpha$ as in 
(\ref{eqn:case2_eqn3}), and $A_p$ (experimentally) as in 
Table~\ref{table_coefs2}.}
\label{table_ratios4}
\end{table}

From the data in Table~\ref{table_ratios4},
following the explanation of Section~\ref{ss:case2of_S:aswddata}, the
Atkin Swinnerton-Dyer basis 
of $S_3(\Gamma_{8^3.6.3.1^3})$
when $p\equiv 1\mod 3$ should be $h_1, h_2$,
and when $p\equiv 2\mod 3$, it should consist of forms of the form
$h_1 + \alpha h_2$ with $\alpha^3=4$.


\subsubsection{Atkin Swinnerton-Dyer congruences for
$\Gamma_{24.3.2^3.1^3}$}
\label{SSS:24.3}
Basis of $S_3(\Gamma_{24.3.2^3.1^3})$:
\begin{eqnarray*}
h_1(z) &=&\displaystyle{
\sqrt[3]{
\frac{\eta(2\tau)^{22}\eta(8\tau)^{8}}{\eta( \tau)^{4}\eta(4\tau)^{8}}}
=
q
 + \frac{4}{3} q^2
 - \frac{40}{9} q^3
 - \frac{400}{81} q^4
 + \frac{1454}{243} q^5
 + \frac{1888}{729} q^6
 - \frac{13168}{6561} q^7+ \cdots}
\\
h_2(z) &=&
\displaystyle{
\sqrt[3]{
\frac{\eta(2\tau)^{20}\eta(4\tau)^{2}\eta(8\tau)^{4}}{\eta( \tau)^{8}}}
=
q
 + \frac{8}{3} q^2
 + \frac{8}{9} q^3
 - \frac{32}{81} q^4
 - \frac{82}{243} q^5
 - \frac{5440}{729} q^6
 - \frac{24400}{6561} q^7+ \cdots}
\end{eqnarray*}

\comment{
created in pari with:
hf1=(eta(q)^(-4)*eta(q^2)^(22)*eta(q^4)^(-8)*eta(q^8)^8)^(1/3)*q;
hf2=(eta(q)^(-8)*eta(q^2)^(20)*eta(q^4)^(2)*eta(q^8)^4)^(1/3)*q;

printtex(hf1 + O(q^8))
printtex(hf2 + O(q^8))
}
First few prime coefficients:
$$
\begin{array}{ccccccccccc}
p& 2 & 3 & 5 & 7 & 11 & 13 & 17 & 19\\
a_p 
  & \frac{4}{3}
 & -\frac{40}{9}
 & \frac{1454}{243}
 & -\frac{13168}{6561}
 & \frac{38671144}{4782969}
 & -\frac{2230795138}{129140163}
 & -\frac{418720079278}{31381059609}
 & \frac{30660416258552}{2541865828329}
\\
b_p \rule{0ex}{3ex}
 & \frac{8}{3}
 & \frac{8}{9}
 & -\frac{82}{243}
 & -\frac{24400}{6561}
 & \frac{16345336}{4782969}
 & \frac{1236747902}{129140163}
 & \frac{842483994194}{31381059609}
 & -\frac{34758650729368}{2541865828329}
\end{array}
$$
Note that up to sign these are identical to the coefficients of the
forms given for the $\Gamma_{8^3.6.3.1^3}$ case, and so the 
ASwD basis is expected to be the same as in the 
$\Gamma_{8^3.6.3.1^3}$ case, namely $h_1, h_2$ when
$p\equiv 1\mod 3$ and 
$h_1 + \alpha h_2$ with $\alpha^3=4$ when $p\equiv 2\mod 3$.

\begin{table}
$
\begin{array}{|l|ll|}
\hline
p &\frac {a_{np}}{ a_n} & \frac  {b_{np}}{ b_n}  \\
\hline
 7  & 17 & 29        \\ 
 13 & 52 & 130       \\ 
 19 & 48 & 346       \\ 
 31 & 915& 46        \\ 
 37 & 165& 1169      \\ 
 43 & 11 & 1838      \\ 
\hline
\end{array}
$
\caption{Values of $a_{np}/a_n$ and $b_{np}/b_n$ for $p\equiv 1\mod 3$,
for $h_1$ and $h_2$ for 
$S_3(\Gamma_{24.3.2^3.1^3})$.
These values are the same as those in Table~\ref{table_ratios3}.
}
\label{table_ratios5}
\end{table}

\begin{table}
$
\begin{array}{|l|ll|}
\hline
p &
\frac {a_{np}}{ b_n}  &  \frac{ b_{np}}{ a_n}\mod p^2 
\\
\hline
5        & 3   & 1     \\ 
 11      & 84  & 32    \\ 
 17      & 278 & 243   \\ 
 23      & 335 & 130   \\ 
 29      & 272 & 441   \\ 
 41      & 0   &   0   \\ 
 47      & 302 & 760   \\ 
\hline
\end{array}
$
\caption{Values of $a_{np}/b_n$ and $b_{np}/a_n$ for $p\equiv 2\mod 3$,
for $h_1$ and $h_2$ for 
$S_3(\Gamma_{24.3.2^3.1^3})$.
These values are the same as those in Table~\ref{table_ratios4}.
}
\label{table_ratios6}
\end{table}

\subsubsection{Atkin Swinnerton-Dyer congruences for
$\Gamma_{24.3.2^3.1^3 B}$}
\label{SSS:836}
This is a conjugate of the $S_3(\Gamma_{24.3.2^3.1^3})$ example
by the involution
\[
W_8 = \begin{pmatrix}
       0 & -1\\8 & 0
      \end{pmatrix}. 
\]

Basis of $S_3(\Gamma_{24.3.2^3.1^3 B})$
in terms of $r=q^{1/3}$.
\begin{eqnarray*}
h_1(z)
&=&
\displaystyle{
\sqrt[3]{\frac{\eta(z)^{8}\eta(4z)^{22}}{\eta(8z)^{4}\eta(8z)^8}}}
=
\sum_{n\ge 1}a_nr^n
=r^2
 - \frac{8}{3} r^5
 + \frac{20}{9} r^8
 - \frac{256}{81} r^{11}
 - \frac{64}{243} r^{14}
 + \cdots
\\
h_2(z)
&=&
\displaystyle{
\sqrt[3]{\frac{\eta(z)^{4}\eta(2z)^{2}\eta(4z)^{20}}{\eta(8z)^8}}}
=
\sum_{n\ge 1}b_nr^n
=
r
 - \frac{4}{3} r^4
 - \frac{16}{9} r^7
 + \frac{112}{81} r^{10}
+ \ldots
\end{eqnarray*}

\comment{
created in pari with:
hi1=(eta(r^12)^22*eta(r^3)^8/eta(r^6)^8/eta(r^24)^4)^(1/3)*r^2;
hi2=(eta(r^3)^4*eta(r^6)^2*eta(r^12)^20/eta(r^24)^8)^(1/3)*r;
}

First few prime coefficients:
$$
\begin{array}{cccccccccccc}
p& 
2 & 3 
& 5 & 7 & 11 & 13 & 17 & 19 & 23 & 29 & 31
\\
a_p 
  & 1
 & 0
 & -\frac{8}{3}
 & 0
 & -\frac{256}{81}
 & 0
 & \frac{7984}{729}
 & 0
 & \frac{172544}{19683}
 & -\frac{18907736}{1594323}
 & 0
\\
b_p \rule{0ex}{3ex}
 & 0
 & 0
 & 0
 & -\frac{16}{9}
 & 0
 & -\frac{1534}{243}
 & 0
 & \frac{78560}{6561}
 & 0
 & 0
 & -\frac{126424784}{4782969}
\end{array}
$$
Ratios when terms are non-zero:
\begin{table}
$
\hspace{-1.6in}
{\small
\begin{array}{|r|rr|rr|rr|rr|rrr|}
\hline
p &
\multicolumn{2}{|c|}{{a_{np}}/{ a_n}}&
\multicolumn{2}{|c|}{   {b_{np}}/{ b_n}}&
\multicolumn{2}{|c|}{ {a_{np}}/{ b_n}}&  
\multicolumn{2}{|c|}{{ b_{np}}/{ a_n}}
&
\omega
&
i
&
\sqrt[3]{2}
\\
\hline
7 & \multicolumn{2}{|r|}{32 =-\sqrt{-3}\cdot\omega^2}
  & 20 &=\sqrt{-3}\cdot\omega  &   &  &   &  & 18  & \sqrt{-3}=12 &\\
13 & 52 &=-13\cdot\omega  & 130 &=-13\cdot\omega^2  &   &  &   &  & 22  & \sqrt{-3}=45 &  \\
19 & 313 &=11\sqrt{-3}\cdot\omega  & \multicolumn{2}{|r|}{15 =-11\sqrt{-3}\cdot\omega^2}
  &   &  &   &  & 68 & \sqrt{-3}=137 &  \\
31 & 46 &=24\sqrt{-3}  & 915 &=-24\sqrt{-3}  &   &  &   &  & 439 & \sqrt{-3}= 82 & \\
37 & 165 &=35\cdot\omega^2  & 1169 &=35\cdot\omega  &   &  &   &  & 581& &    \\
43 & \multicolumn{2}{|r|}{1838 =24\sqrt{-3}} & 11 &=-24\sqrt{-3}  &   &  &   &  & 423 &\sqrt{-3}=847 &  \\
\hline
\end{array}
}
$
\caption{Values of $a_{np}/a_n$ and $b_{np}/b_n$ for $p\equiv 1\mod 3$,
for $h_1$ and $h_2$ for 
$S_3(\Gamma_{24.3.2^3.1^3 B})$.}
\label{table_ratios7}
\end{table}

\begin{table}
$
\hspace{-1.6in}
{\small
\begin{array}{|r|rr|rr|rr|rr|rrr|}
\hline
p &
\multicolumn{2}{|c|}{{a_{np}}/{ a_n}}&
\multicolumn{2}{|c|}{   {b_{np}}/{ b_n}}&
\multicolumn{2}{|c|}{ {a_{np}}/{ b_n}}&  
\multicolumn{2}{|c|}{{ b_{np}}/{ a_n}}
&
\omega
&
i
&
\sqrt[3]{2}
\\
\hline
5 &&&&& 14 & =6\sqrt{-2}\cdot \frac{\sqrt{2}}{2\sqrt[3]2}  & 2 & =6\sqrt{-2}
\cdot\frac{2\sqrt[3]2}{\sqrt{2}}
 &  &i=7   &  3\\
7 & \multicolumn{2}{|r|}{32 =-\sqrt{-3}\cdot\omega^2}
  & 20 &=\sqrt{-3}\cdot\omega  &   &  &   &  & 18  & \sqrt{-3}=12 &\\
11 &   & &   & & 79 & =6\sqrt{-6}\cdot \frac{\sqrt{-2}}{2\sqrt[3]2}  & 57 & ={6\sqrt{-6}}\cdot
\frac{2\sqrt[3]2}{\sqrt{-2}}
  &    & \sqrt{3}=27 & 73\\
13 & 52 &=-13\cdot\omega  & 130 &=-13\cdot\omega^2  &   &  &   &  & 22  & \sqrt{-3}=45 &  \\
17 &   & &   & & 139 & =6\sqrt{-2}\cdot \frac{\sqrt{2}}{2\sqrt[3]2}  & 197 & ={6\sqrt{-2}}
\cdot\frac{2\sqrt[3]2}{\sqrt{2}}&&i=38 &  195\\
19 & 313 &=11\sqrt{-3}\cdot\omega  & \multicolumn{2}{|r|}{15 =-11\sqrt{-3}\cdot\omega^2}
  &   &  &   &  & 68 & \sqrt{-3}=137 &  \\
23 &   & &   & & 97 & =-18\sqrt{-6}\cdot \frac{\sqrt{-2}}{2\sqrt[3]{2}}  & 269
 & = {-18\sqrt{-6}}\cdot\frac{2\sqrt[3]{2}}{\sqrt{-2}}  &  
 &\sqrt{3}=223      & 384\\
29 &   & &   & & 136 & =-24\sqrt{-2}\cdot \frac{\sqrt{2}}{2\sqrt[3]2}  & 41 & ={-24\sqrt{-2}}
\cdot\frac{2\sqrt[3]2}{\sqrt{2}}
  && i=800 & 403\\
31 & 46 &=24\sqrt{-3}  & 915 &=-24\sqrt{-3}  &   &  &   &  & 439 & \sqrt{-3}= 82 & \\
37 & 165 &=35\cdot\omega^2  & 1169 &=35\cdot\omega  &   &  &   &  & 581& &    \\
41 &   & &   & & 0 &   & 0 &  &   &  &\\
43 & \multicolumn{2}{|r|}{1838 =24\sqrt{-3}} & 11 &=-24\sqrt{-3}  &   &  &   &  & 423 &\sqrt{-3}=847 &  \\
47 &   & &   & & 2058 & =6\sqrt{-6}\cdot \frac{\sqrt{-2}}{2\sqrt[3]{2}}  & 689 & 
6\sqrt{-6}\cdot\frac{2\sqrt[3]{2}}{\sqrt{-2}}  &   &\sqrt{3}=270    & 1854 \\
\hline
\end{array}
}
$
\caption{Values of $a_{np}/b_n$ and $b_{np}/a_n$ for $p\equiv 2\mod 3$,
for $h_1$ and $h_2$ for 
$S_3(\Gamma_{24.3.2^3.1^3 B})$.}
\label{table_ratios8}
\end{table}

Atkin Swinnerton-Dyer basis:
\begin{eqnarray*}
\text{if } p\equiv 1\mod 3   &\text{ basis is }& h_1, h_2\\
\text{if }p\equiv 5\mod 12  &\text{ basis is }& h_1\pm \frac{\sqrt{2}}{2\sqrt[3]{2}}h_2\\
\text{if }p\equiv 11\mod 12 &\text{ basis is }& h_1\pm \frac{\sqrt{-2}}{2\sqrt[3]{2}}h_2\\
\end{eqnarray*}


\subsection{Examples associated with 
newform in $S_3 (\Gamma _0 (243), \chi)$}
$$f(z) =
q + 3iq^2 - 5q^4 + 6iq^5 + 11q^7 - 3iq^8 - 18q^{10} + 12iq^{11}+\cdots
$$
where $i$ is a root of $x^2+1=0$.
Note, the corresponding Galois representation
is a twist of the representation corresponding  to
$E_6(3r^3)$.

The first few prime coefficients $\tilde A_p$ of this form are as follows:

$$
\begin{array}{c|cccccccccccccc}
p  & 5 & 7 & 11 & 13 & 17 & 19 & 23 & 29 & 31 & 37\\
\hline 
\tilde A_p&    6i&    11&    12i&    5&    -18i&    -19&    -30i&    48i&    -13&    17\\
\end{array}
$$

\subsubsection{Atkin Swinnerton-Dyer congruences for
$\Gamma_{18.6.3^3.1^3}$}
\label{SSS:186}
Basis of $S_3(\Gamma_{18.6.3^3.1^3})$

\begin{eqnarray*}
h_1(z)
&=&
\displaystyle{
\sqrt[3]{\frac{\eta(z)^{4}\eta(2z)^7\eta(6z)^{11}}{\eta(3z)^4}}}
=
\sum_{n\ge 1}a_nq^n
=
q - \frac 4 3q^2 - \frac{31}{9}q^3 + \frac{400}{81}q^4 
+ \frac{104}{243}q^5 + \cdots
\\
h_2(z)
&=&
\displaystyle{
\sqrt[3]{\frac{\eta(3z)^{4}\eta(6z)^7\eta(2z)^{11}}{\eta(z)^4}}}
=
\sum_{n\ge 1}b_nq^n
=
 q + \frac 4 3q^2 - \frac 7 9q^3 - \frac {112} {81}q^4 
- \frac{616}{243}q^5 + \ldots
\end{eqnarray*}

\comment{
created in pari with:

hc1=(eta(q)^4*eta(q^2)^7*eta(q^6)^11/eta(q^3)^4)^(1/3)*q;
hc2=(eta(q^3)^4*eta(q^6)^7*eta(q^2)^11/eta(q)^4)^(1/3)*q;

}

First few prime coefficients:
$$
\begin{array}{ccccccccccc}
p& 2 & 3 & 5 & 7 & 11 & 13 & 17 & 19\\
a_p & -\frac{4}{3}
 & -\frac{31}{9}
 & \frac{104}{243}
 & \frac{44018}{6561}
 & -\frac{38654696}{4782969}
 & -\frac{1857609346}{129140163}
 & \frac{362933655200}{31381059609}
 & -\frac{33243449873158}{2541865828329}\\
b_p \rule{0ex}{3ex}& \frac{4}{3}
 & -\frac{7}{9}
 & -\frac{616}{243}
 & -\frac{15886}{6561}
 & \frac{43656424}{4782969}
 & -\frac{343807618}{129140163}
 & -\frac{100695940768}{31381059609}
 & \frac{19258418018042}{2541865828329}
\end{array}
$$

with $a_n$ and $b_n$ the coefficients of the non-congruence forms given
above. 
The following ratios, all computed mod $p^2$, appear to be constant as $n$ varies, for the given $p$s.
The table shows the constants; if no entry is shown, this means the 
ratio is not constant in this case.
$$
\begin{array}{|l|llll|}
\hline
p &\frac {a_{np}}{ a_n}
  & \frac  {b_{np}}{ b_n}   & \frac {a_{np}}{ b_n}  &  \frac{ b_{np}}{ a_n}\mod p^2 \\
\hline
5  & 	  &        &   3  &  13         \\
7  &    36&      2 &      &       	\\
11 &      &        &   13 &        82  	\\
13 &   54 &     110&   	  &	        \\
17 &      &        &   279&  148  	\\
19 &   228&     152&   	  &	        \\
23 &      &        &   130&  400  	\\
29 &      &        &   296&  515  	\\
31 &   915&     59 &      &             \\                                          
37 &   1058&   294 &	  &             \\
\hline
\end{array}
$$

\subparagraph{Case I: $p\equiv 1\mod 3$}
\label{SP:case1}
These ratios are a special case of the Atkin-Swinnerton-Dyer type relation,
e.g., $a_{7n}/a_n\equiv 36\mod 7^2$ can be written as
$$a_{7n} - 36 a_n + 7^2 a_{n/p}\equiv 0\mod 7^2. $$
So, for $p\equiv 1\mod3$, it looks like
$h_1$ and $h_2$ form an Atkin Swinnerton-Dyer basis.

Note that for $p$ in the above table with $p\equiv 1\mod p$, except 
for the case $p=19$,
we have $(\frac {a_{np}}{ a_n}/\frac  {b_{np}}{ b_n})^3\equiv 1\mod p^2$.

It's not surprising that this relation holds,
since the ratios ought to be the
values of $A_p$ given above, which we can see should always be
$\omega$ or $\omega^2$ in these cases, including for $p=19$.

The reason the congruence does not hold for $p=19$ is that
in this case we have $\omega,\omega^2\equiv 68,292\mod 19^2$, and
$\alpha_1=-19\omega, \alpha_2=-19\omega^2\equiv 152,228\mod 19^2$, so we only have that
$\alpha_1/19\equiv \omega\mod 19$,
$\alpha_2/19\equiv \omega^2\mod 19$,
i.e., the ratio satisfies 
$(\frac {a_{19n}}{ a_n}/\frac  {b_{19n}}{ b_n})^3\equiv 1\mod 19$,
which we can check is true.

\subparagraph{Case II: $p\equiv 2\mod 3$}
\label{SP:case2}
Observation: when $p\equiv 2\mod 3$ we always have
$(\frac{a_{np}}{ b_n}/\frac{ b_{np}}{ a_n})^3 \equiv -9\mod p^2$.

\comment{
\\ check that all these have value 9:
(-   3/13 )^3
(-  13/82 )^3
(- 279/148)^3
(- 130/400)^3
(- 296/515)^3

}

Suppose that the Atkin Swinnerton-Dyer basis is $h_1 + \alpha h_2$, 
then (writing $\alpha_p=\alpha\mod p^2$) we would have
$$a_{pn} + \alpha_p b_{pn} \equiv A_p (a_{n} + \alpha_p b_{n})\mod p^2,$$
and suppose we in fact have
$$a_{pn} \equiv  A_p\alpha_p b_{n}\mod p^2,
\text{ and }\alpha_p b_{pn} \equiv A_p a_{n} \mod p^2,$$
then this implies that
$a_{pn}/b_n = A_p\alpha_p$ and $b_{pn}/a_n = A_p/\alpha_p$,
so $\alpha_p^2 \equiv 
\frac {a_{np}}{ b_n}/\frac{ b_{np}}{ a_n}$,
so from the above observation we expect
$\alpha^6\equiv -9\mod p^2$,
i.e., 
$\alpha\equiv \sqrt[3]{3}i\mod p^2$, so it seems that
for $p\equiv 2\mod 3$ we should have Atkin Swinnerton-Dyer basis
consisting of forms of the form
$h_1 + \alpha h_2$, where $\alpha^6=-9$.

The value of $A_p$ is given by $A_p\equiv
\pm\sqrt{\frac {a_{np}}{ b_n}\frac{ b_{np}}{ a_n}}\mod p^2$, whereas the values for 
$p\equiv 1\mod 3$ are those already in the table above.
From the values in the above table, we compute the following table of $A_p$s,
with no particular order given to the two possible values.
In this table, we write e.g., $A_p\equiv 6i\mod 25$ to mean that
$A_p^2\equiv -36\mod 25$, etc, and $\omega$ means
$\omega^2 + \omega + 1\equiv 0\mod p^2$.
$$
\begin{array}{c|cccccccccccccc}
p  & 5 & 7 & 11 & 13 & 17 & 19 & 23 & 29 & 31 & 37\\
\hline
A_p&6i &11\omega &12i &5\omega &18i &-19\omega&30i &48i &-13\omega&17\omega\\
\mod p^2&-6i&11\omega^2  &-12i&5\omega^2&-18i&-19\omega^2&-30i&-48i&-13\omega^2 &17\omega^2\\
\end{array}
$$


\subsubsection{Atkin Swinnerton-Dyer congruences for
$\Gamma_{9.6^3.3.2^3}$}
\label{SS:96}
 
Basis of $S_3(\Gamma_{9.6^3.3.2^3})$ in terms of $r=q^{1/3}$.
\begin{eqnarray*}
h_1(z)
&=&
\displaystyle{
\sqrt[3]{
\frac{
\eta( \tau)^{7}\eta(2\tau)^{4}\eta(3\tau)^{11}}{\eta(6\tau)^{4}}}
=
\sum_{n\ge 1}a_nr^n
=
r
 - \frac{7}{3} r^4
 - \frac{19}{9} r^7
 + \frac{193}{81} r^{10}
 + \frac{2306}{243} r^{13}+ \cdots}\\
\\
h_2(z)
&=&
\displaystyle{
\sqrt[3]{
\frac{\eta( \tau)^{11}\eta(3\tau)^{7}\eta(6\tau)^{4}}{\eta(2\tau)^{4}}}
=
\sum_{n\ge 1}b_nr^n
= 
r^2
 - \frac{11}{3} r^5
 + \frac{23}{9} r^8
 - \frac{13}{81} r^{11}
+\cdots
}
\end{eqnarray*}

\comment{
created in pari with:

\\ r=q^{1/3}, s=q^{2/3}

hg1=(eta(r^3)^7*eta(r^6)^4*eta(r^9)^11/eta(r^18)^4)^(1/3)*r;
hg2=(eta(r^3)^11*eta(r^9)^7*eta(r^18)^4/eta(r^6)^4)^(1/3)*r^2;

printtex(hg1 + O(r^14))
printtex(hg2 + O(r^14))

}

First few prime coefficients:
\newline
$
\begin{array}{|ccccccccc|}
\hline
p& 2 & 3 & 5 & 7 & 11 & 13 & 17 & 19\\
a_p  & 0
 & 0
 & 0
 & -\frac{19}{9}
 & 0
 & \frac{2306}{243}
 & 0
 & -\frac{151696}{6561}
\\
b_p \rule{0ex}{3ex}
 & 1
 & 0
 & -\frac{11}{3}
 & 0
 & -\frac{13}{81}
 & 0
 & -\frac{7130}{729}
 & 0\\
\hline
\end{array}
$

First few prime coefficients mod $p^2$.
Notice that these are either zero or the same as
in the $\Gamma_{18.6.3^3.1^3}$ case.

Ratios of coefficients, (when all terms are non-zero), all numbers
given mod $p^2$.  When 
$p\equiv 2\mod 3$, there is a unique cube root
mod $p^2$ of any integer, so the given value of $\sqrt[3]{3}$
is unique.  $i$ means the square root of $-1$.
$$
{\small
\begin{array}{|r|rr|rr|rr|rr|rrr|}
\hline
p &
\multicolumn{2}{|c|}{{a_{np}}/{ a_n}}&
\multicolumn{2}{|c|}{   {b_{np}}/{ b_n}}&
\multicolumn{2}{|c|}{ {a_{np}}/{ b_n}}&  
\multicolumn{2}{|c|}{{ b_{np}}/{ a_n}}
&
\omega
&
\omega^2
&
\sqrt[3]{3}
\\
\hline
5 &   & &   & & 3 & =6i\cdot i\sqrt[3]{3}  & 13 & =6i/ i\sqrt[3]{3}  &   &   & 12 \\
7 & 36 &=11\cdot\omega^2  & 2 &=11\cdot\omega  &   &  &   &  & 18 & 30 &   \\
11 &   & &   & & 13 & =12i\cdot i\sqrt[3]{3}  & 82 & =12i/i\sqrt[3]{3}  &   &   & 9 \\
13 & 54 &=5\cdot\omega^2  & 110 &=5\cdot\omega  &   &  &   &  & 22 & 146 &   \\
17 &   & &   & & 279 & =-18i\cdot i\sqrt[3]{3}  & 148 & =-18i/i\sqrt[3]{3}  &   &   & 160 \\
19 & 228 &=-19\cdot\omega^2  & 152 &=-19\cdot\omega  &   &  &   &  & 68 & 292 &   \\
23 &   & &   & & 130 & =-30i\cdot i\sqrt[3]{3}  & 400 & =-30i/i\sqrt[3]{3}  &   &   & 357 \\
29 &   & &   & & 296 & =48i\cdot \sqrt[3]{3}  & 515 & =48i/i\sqrt[3]{3}  &   &   & 134 \\
31 & 915 &=-13\cdot\omega^2  & 59 &=-13\cdot\omega  &   &  &   &  & 439 & 521 &   \\
37 & 1058 &=17\cdot\omega^2  & 294 &=17\cdot\omega  &   &  &   &  & 581 & 787 &   \\
41 &   & &   & & 1384 & =-30i\cdot i\sqrt[3]{3}  & 869 & =-30i/i\sqrt[3]{3}  &   &   & 1503 \\
43 & 1173 &=29\cdot\omega^2  & 647 &=29\cdot\omega  &   &  &   &  & 1425 & 423 &   \\
47 &   & &   & & 155 & =-24i\cdot i\sqrt[3]{3}  & 1906 & =-24i/i\sqrt[3]{3}  &   &   & 1203 \\
\hline
\end{array}
}
$$

The above table indicates that when $p\equiv1\mod 3$, we have
$$a_{np}-A_p\omega^2a_n\equiv 0\mod p^2
\;\;\;\;\;
\text{ and }
\;\;\;\;\;
b_{np}-A_p\omega b_n\equiv 0\mod p^2$$
for certain $A_p$, indicating $h_1, h_2$ is an ASWD-basis in this case.

Note that this relation only hold when terms are non zero.
E.g., $b_{1}=0$, so we can't have $b_{p}+A_pb_{1}\equiv 0\mod p$ for any $p$ with
$b_p\not=0$.

For $p\equiv 2\mod 3$, the above table indicates that we have
\begin{align*}
\left(a_{np} + i\sqrt[3]{3}b_{np}\right)
+iA_p\left(a_{n} + i\sqrt[3]{3}b_{n}\right)
&\equiv 0\mod p^2\\
\left(a_{np} - i\sqrt[3]{3}b_{np}\right)
-iA_p\left(a_{n} - i\sqrt[3]{3}b_{n}\right)
&\equiv 0\mod p^2,
\end{align*}
so $h_1+i\sqrt[3]{3}h_2$, $h_1-i\sqrt[3]{3}h_2$ 
should be the ASWD-basis in this case.
(shouldn't make any difference which cube root of three is taken)


\subsection{Examples associated with newform in $S_3(\Gamma _0 (48), \chi).$}
\label{SS:gam48}
\begin{eqnarray*}
f(z)&=&
q - \sqrt{-2}q^{2} - 2q^{4} + 3\sqrt{-2}q^{5} - 7q^{7} + 2\sqrt{-2}q^{8} +
 6q^{10} - 3\sqrt{-2}q^{11} + 5q^{13}\\
&& + 7\sqrt{-2}q^{14} + 4q^{16} - 18\sqrt{-2}q^{17} + 
  17q^{19} - 6\sqrt{-2}q^{20} - 6q^{22} - 6\sqrt{-2}q^{23}\\
&& + 7q^{25} - 5\sqrt{-2}q^{26} 
+ 14q^{28} - 39\sqrt{-2}q^{29} + 59q^{31} - 4\sqrt{-2}q^{32} - 
    36q^{34} +\cdots
\end{eqnarray*}

First few coefficients $a_p$,
First few prime coefficients, divided by either $1$
or 3$\sqrt{-2}$, for easy readability
$$
\begin{array}{|c|ccccccccccccccccc|}
\hline
p & 5 & 7 & 11 & 13 & 17 & 19 & 23 & 29 & 31 & 37 & 41 & 43 & 47 & 53 & 59 & 61 & 67 \\ 
\hline
a_p & &-7& &5& &17& & &59&-19& &47& & & &-4&-46 \\
\frac{a_p}{3\sqrt{-2}}&1& &-1& &-6& &-2&-13& & &13& &-19&9&-5& &\\
\hline
\end{array}
$$

\subsubsection{Atkin Swinnerton-Dyer congruences for
$\Gamma_{9.6^4.1^3}$}
\label{SS:961}

Basis of $S_3(\Gamma_{9.6^4.1^3})$
\begin{eqnarray*}
h_1(z)
&=&
\displaystyle{
\sqrt[3]{\frac{\eta(z)^{13}\eta(6z)^{14}}{\eta(2z)^{2}\eta(3z)^7}}}
=
\sum_{n\ge 1}a_nq^n
=
q - \frac {13} 3q^2 + \frac{32}{9}q^3 + \frac{670}{81}q^4 
- \frac{3577}{243}q^5 + \cdots
\\
h_2(z)
&=&
\displaystyle{
\sqrt[3]{\frac{\eta(z)^{14}\eta(6z)^{13}}{\eta(2z)^{7}\eta(3z)^2}}}
=
\sum_{n\ge 1}b_nq^n
=
 q - \frac {14} 3q^2 + \frac {56} 9q^3 - \frac {58} {81}q^4 
+ \frac{266}{243}q^5 + \ldots
\end{eqnarray*}

\comment{
created in pari with:

hd1=(eta(q)^13*eta(q^6)^14/eta(q^2)^2/eta(q^3)^7)^(1/3)*q;
hd2=(eta(q)^14*eta(q^6)^13/eta(q^2)^7/eta(q^3)^2)^(1/3)*q;

}

First few prime coefficients:
$$
\begin{array}{ccccccccccc}
p& 2 & 3 & 5 & 7 & 11 & 13 & 17 & 19\\
a_p 
 & -\frac{13}{3}
 & \frac{32}{9}
 & -\frac{3577}{243}
 & \frac{38780}{6561}
 & \frac{97488844}{4782969}
 & -\frac{198000616}{129140163}
 & \frac{1030071452831}{31381059609}
 & -\frac{91038813695632}{2541865828329}
\\
b_p \rule{0ex}{3ex}

& -\frac{14}{3}
 & \frac{56}{9}
 & \frac{266}{243}
 & -\frac{1036}{6561}
 & \frac{24235144}{4782969}
 & -\frac{2216727472}{129140163}
 & -\frac{894269035558}{31381059609}
 & \frac{97467805305080}{2541865828329}
\end{array}
$$

$$
\begin{array}{|l|llll|crcr|}
\hline
p &\frac {a_{np}}{ a_n} & \frac  {b_{np}}{ b_n}   & 
\frac {a_{np}}{ b_n}  &  \frac{ b_{np}}{ a_n}\mod p^2 
&(\frac {a_{np}}{ a_n}/\frac  {b_{np}}{ b_n})^3
&\frac {a_{np}}{ a_n}\frac  {b_{np}}{ b_n}&
(\frac {a_{np}}{ b_n}/ \frac{ b_{np}}{ a_n})^6
&
\frac {a_{np}}{ b_n}\frac{ b_{np}}{ a_n}
\\
\hline
5 &   &   & 11 & 12 &   &   & 4 & -18\\ 
 7 & 35 & 21 &   &   & 1 & 0 &   &  \\ 
 11 &   &   & 94 & 41 &   &   & 75 & -18\\ 
 13 & 54 & 110 &   &   & 1 & 5^2 &   &  \\ 
 17 &   &   & 10 & 282 &   &   & 69 & -18\cdot6^2\\ 
 19 & 271 & 73 &   &   & 1 & 17^2 &   &  \\ 
 23 &   &   & 503 & 369 &   &   & 522 & -18\cdot2^2\\ 
 29 &   &   & 661 & 101 &   &   & 724 & -18\cdot13^2\\ 
 31 & 948 & 915 &   &   & 1 & 59^2 &   &  \\ 
 37 & 106 & 1282 &   &   & 1 & 19^2 &   &  \\ 
 41 &   &   & 1463 & 1587 &   &   & 1656 & -18\cdot13^2\\ 
 43 & 1391 & 411 &   &   & 1 & 47^2 &   &  \\ 
 47 &   &   & 2117 & 887 &   &   & 519 & -18\cdot19^2\\ 
\hline
\end{array}
$$


\subsubsection{Atkin Swinnerton-Dyer congruences for
$\Gamma_{18.3^4.2^3}$}
\label{SS:183}

Basis of $S_3(\Gamma_{18.3^4.2^3})$,
in terms of $r=q^{1/3}$:
\begin{eqnarray*}
h_1(z)
&=&
\displaystyle{
\sqrt[3]{
\frac{\eta(2\tau)^{13}\eta(3\tau)^{14}}{\eta(6\tau)^{7}\eta( \tau)^{2}}}
=
\sum_{n\ge 1}a_nr^n
= r + \frac{2}{3}r^4 - \frac{28}{9}r^7
- \frac{482}{81}r^{10} - \frac{736}{243}r^{13}
+\cdots
}
\\
h_2(z)
&=&
\displaystyle{
\sqrt[3]{
\frac{\eta(2\tau)^{14}\eta(3\tau)^{13}}{\eta(6\tau)^{2}\eta( \tau)^{7}}} 
=
\sum_{n\ge 1}b_nq^n
= r^2 + 
\frac{7}{3}r^5 + \frac{14}{9}r^8
- \frac{148}{81}r^{11}
- \frac{1708}{243}r^{14}
+\cdots}
\end{eqnarray*}

\comment{
created in pari with:

\\ use r=q^{1/3}

hh1=(eta(r^6)^13*eta(r^9)^14/eta(r^3)^2/eta(r^18)^7)^(1/3)*r;
hh2=(eta(r^6)^14*eta(r^9)^13/eta(r^3)^7/eta(r^18)^2)^(1/3)*r^2;

}

First few prime coefficients:
$$
\begin{array}{ccccccccccc}
p& 2 & 3 & 5 & 7 & 11 & 13 & 17 & 19\\
a_p 
 & 0
 & 0
 & 0
 & -\frac{28}{9}
 & 0
 & -\frac{736}{243}
 & 0
 & \frac{120680}{6561}
\\
b_p \rule{0ex}{3ex}
  & 1
 & 0
 & \frac{7}{3}
 & 0
 & -\frac{148}{81}
 & 0
 & -\frac{4529}{729}
 & 0
\end{array}
$$

$$
{\small
\begin{array}{|r|rr|rr|rr|rr|rrr|}
\hline
p &
\multicolumn{2}{|c|}{{a_{np}}/{ a_n}}&
\multicolumn{2}{|c|}{   {b_{np}}/{ b_n}}&
\multicolumn{2}{|c|}{ {a_{np}}/{ b_n}}&  
\multicolumn{2}{|c|}{{ b_{np}}/{ a_n}}
&
\omega
&
\omega^2
&
\sqrt[3]{3}
\\
\hline
5 &   & &   & & 3 & =-1\cdot6\sqrt[3]{3}  & 19 & =1\cdot3/\sqrt[3]{3}  &   &   & 12\\
7 & 35 &=-7\cdot\omega^2  & 21 &=-7\cdot\omega  &   &  &   &  & 18 & 30 &  \\
11 &   & &   & & 54 & =1\cdot6\sqrt[3]{3}  & 40 & =-1\cdot3/\sqrt[3]{3}  &   &   & 9\\
13 & 54 &=5\cdot\omega^2  & 110 &=5\cdot\omega  &   &  &   &  & 22 & 146 &  \\
17 &   & &   & & 269 & =6\cdot6\sqrt[3]{3}  & 148 & =-6\cdot3/\sqrt[3]{3}  &   &   & 160\\
19 & 271 &=17\cdot\omega^2  & 73 &=17\cdot\omega  &   &  &   &  & 68 & 292 &  \\
23 &   & &   & & 52 & =2\cdot6\sqrt[3]{3}  & 80 & =-2\cdot3/\sqrt[3]{3}  &   &   & 357\\
29 &   & &   & & 360 & =13\cdot6\sqrt[3]{3}  & 370 & =-13\cdot3/\sqrt[3]{3}  &   &   & 134\\
31 & 948 &=59\cdot\omega^2  & 915 &=59\cdot\omega  &   &  &   &  & 439 & 521 &  \\
37 & 106 &=-19\cdot\omega^2  & 1282 &=-19\cdot\omega  &   &  &   &  & 581 & 787 &  \\
41 &   & &   & & 436 & =-13\cdot6\sqrt[3]{3}  & 47 & =13\cdot3/\sqrt[3]{3}  &   &   & 1503\\
43 & 1391 &=47\cdot\omega^2  & 411 &=47\cdot\omega  &   &  &   &  & 1425 & 423 &  \\
47 &   & &   & & 184 & =19\cdot6\sqrt[3]{3}  & 661 & =-19\cdot3/\sqrt[3]{3}  &   &   & 1203\\
\hline
\end{array}
}
$$

When $p\equiv 1\mod 3$, we see the ASWD-basis should be $h_1, h_2$.

For $p\equiv 2\mod 3$, the congruences (which only hold when all terms are non-zero)
$$a_{np}/b_p\equiv-\alpha_p\cdot 6\sqrt[3]{3}\;\;\;\;\;\text{ and } \;\;\;\;\;
b_{np}/a_p\equiv\alpha_p\cdot 3/\sqrt[3]{3}
$$
should be rewritten in terms of $u$, where $u^2=-2$, writing $-6=3u\cdot u$, so we have
$$a_{np}/b_p\equiv\alpha_p3u\cdot u\sqrt[3]{3}\;\;\;\;\;\text{ and } \;\;\;\;\;
b_{np}/a_p\equiv\alpha_p3u/u\sqrt[3]{3}.
$$
These imply that 
$a_{np}\equiv\alpha_p3u\cdot u\sqrt[3]{3} b_p\;\;\;\;\;\text{ and } \;\;\;\;\;
u\sqrt[3]{3}b_{np}\equiv\alpha_p3ua_p.
$
so
$$a_{np} + u\sqrt[3]{3}b_{np}
\equiv\alpha_p3u(\cdot u\sqrt[3]{3} b_p+a_p),
$$
which holds for $u$ replaced with $-u$, so the ASWD-basis should be
$h_1 \pm \sqrt{-2}\sqrt[2]{3} h_2$.


\end{document}